\newtheorem{remark}{Remark}[section]
\newtheorem{theorem}[remark]{Theorem}
\newtheorem{definition}[remark]{Definition}
\newtheorem{proposition}[remark]{Proposition}
\numberwithin{equation}{section}
\title{Determining a parabolic system by boundary observation of its non-negative solutions with biological applications}
\author{Hongyu Liu\thanks{Department of Mathematics, City University of Hong Kong, Hong Kong SAR, China\\ Email address: hongyu.liuip@gmail.com, hongyliu@cityu.edu.hk} \, and Catharine W.K. Lo\thanks{Liu Bie Ju Centre for Mathematical Sciences, City University of Hong Kong, Hong Kong SAR, China\\ Email address: wingkclo@cityu.edu.hk} \vspace{-0.5cm}}
\date{}
\begin{document}

\maketitle

\begin{abstract}
    In this paper, we consider the inverse problem of determining some coefficients within a coupled nonlinear parabolic system, through boundary observation of its non-negative solutions. In the physical setup, the non-negative solutions represent certain probability densities in different contexts. We innovate the successive linearisation method by further developing a high-order variation scheme which can both ensure the positivity of the solutions and effectively tackle the nonlinear inverse problem. This enables us to establish several novel unique identifiability results for the inverse problem in a rather general setup.  For a theoretical perspective, our study addresses an important topic in PDE analysis on how to characterise the function spaces generated by the products of non-positive solutions of parabolic PDEs. As a typical and practically interesting application, we apply our general results to inverse problems for ecological population models, where the positive solutions signify the population densities. 
\medskip
\medskip

\noindent{\bf Keywords:}~~Nonlinear parabolic system; positive solutions; probability densities; inverse boundary problem; unique identifiability; high-order variation; successive linearisation.

\noindent{\bf 2010 Mathematics Subject Classification:}  35R30, 35B09, 35K51
    
\end{abstract}

\section{Introduction}

\subsection{Mathematical Setup}

To provide a general picture of our study, we consider the following coupled nonlinear system of parabolic equations:
\begin{equation}\label{GeneralProb}
    \begin{cases}
        \partial_t u(x,t) = F(x,t,u,v,\nabla u, \nabla v, \Delta u, \Delta v) &\quad \text{in }\Omega\times(0,T),\\
        \partial_t v(x,t) = G(x,t,u,v,\nabla u, \nabla v, \Delta u, \Delta v)  &\quad \text{in }\Omega\times(0,T),\\
        u,v\geq0    &\quad \text{in }\bar{\Omega}\times[0,T),
    \end{cases}
\end{equation}
\sloppy where $\Omega\subset\mathbb{R}^n$, $n\geq 2$, is a bounded Lipschitz domain, $T\in(0,\infty]$, and $F(x,t,p_1,q_1,p_2,q_2,p_3,q_3)$ and $G(x,t,p_1,q_1,p_2,q_2,p_3,q_3):\Omega\times (0,T) \times \mathbb{R}^{2n+4}\to \mathbb{R}$ are real-valued functions with respect to $p_i$ and $q_i$, $i=1,2,3$. Here, the functions $u$ and $v$ are required to be non-negative in the domain $\bar{\Omega}\times[0,T]$. In the relevant physical contexts, $u$ and $v$ signify certain probability densities and hence the non-negativity is crucial. More discussion shall be given in the next subsection about this aspect.

In this paper, we are mainly concerned with the inverse problem of determining the functions $F$ and $G$, using knowledge of $u$ (or, $v$) at the boundary $\Sigma:=\partial\Omega\times(0,T)$. As such, we introduce the measurement map $\mathcal{M}^+_{F,G}$:
\begin{equation}\label{eq:mp1}
\mathcal{M}^+_{F,G}(\left.u\right\rvert_\Sigma) = \left. \left( u(x,t),\partial_\nu u(x,t)\right) \right\rvert_\Sigma, 
\end{equation}
where $``+"$ signifies that the boundary data are associated with the non-negative solutions of the coupled parabolic system \eqref{GeneralProb}. The inverse problem mentioned above can be formulated as 
\begin{equation}\label{eq:ip1}
\mathcal{M}^+_{F, G}\longrightarrow F, G. 
\end{equation}
It is emphasised that throughout the paper we fix the boundary data to be associated with $u$, and all of our results hold equally when $u$ is replaced by $v$. 

For the inverse problem \eqref{eq:ip1}, we are mainly concerned with the theoretical unique identifiability issue, which is of primary importance for a generic inverse problem. In its general formulation, the unique identifiability asks whether one can establish the following one-to-one correspondence for two configurations $(F^j,G^j)$, $j = 1, 2$:
\begin{equation}\label{MainUniquessnessProb}
\mathcal{M}^+_{F^1,G^1} = \mathcal{M}^+_{F^2,G^2} \quad\text{ if and only if }\quad (F^1,G^1) = (F^2,G^2).
\end{equation} 
 In this paper, we aim to prove, in formal terms, the following theorem. 

\begin{theorem}\label{FormalThm}
    Let $\mathcal{M}^+_{F^j,G^j}$ be the measurement map associated to \eqref{GeneralProb} for $j=1,2$. Assume $F^j,G^j\in\mathcal{A}$ such that \eqref{CoerCond} holds, where $\mathcal{A}$ is a certain admissible class of the unknowns. 
    Suppose that
    \[
    \mathcal{M}^+_{F^1,G^1}(u|_{\partial\Omega})=\mathcal{M}^+_{F^2,G^2}(u|_{\partial\Omega})\quad \mbox{for all}\ u|_{\partial\Omega}\in\mathcal{S}, 
    \]
    where $\mathcal{S}$ is a properly chosen function space on $\Sigma$. 
    Then \[(F^1,G^1) = (F^2,G^2).\]    
\end{theorem}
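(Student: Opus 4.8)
The plan is to combine well-posedness of the forward problem with a high-order variational (successive linearisation) scheme that is carefully anchored so as never to leave the non-negative cone, and then to reduce the recovery of $F$ and $G$ to a hierarchy of linearised inverse problems, each resolved by unique continuation for parabolic equations together with a completeness-of-products argument. First I would fix a strictly positive state common to both configurations --- typically a constant pair $(c_1,c_2)$ with $c_1,c_2>0$, or more generally an explicit positive solution admitted by all $(F^j,G^j)\in\mathcal{A}$ --- and verify, using the coercivity condition \eqref{CoerCond} together with parabolic regularity and maximum-principle estimates, that for boundary data of the form $g=(c_1,c_2)+\varepsilon_1 g^{(1)}+\cdots+\varepsilon_N g^{(N)}$ with $g^{(k)}\in\mathcal{S}$ and $0<\varepsilon_k\ll 1$, the system \eqref{GeneralProb} has a unique solution $(u_{\bm\varepsilon},v_{\bm\varepsilon})$ depending smoothly on $\bm\varepsilon=(\varepsilon_1,\dots,\varepsilon_N)$ and remaining non-negative on $\bar\Omega\times[0,T]$. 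This step is what legitimises the whole argument: the classical higher-order linearisation lets $\varepsilon\to 0$ from both sides, which would force $u$ or $v$ to change sign, so here the variation must be one-sided and pinned at a strictly positive state, with the admissible size of the $g^{(k)}$ controlled quantitatively.

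Next, writing $u^j_{\bm\varepsilon},v^j_{\bm\varepsilon}$ for the solutions corresponding to $(F^j,G^j)$ and expanding in $\bm\varepsilon$ about the background, I would differentiate the equations in $\varepsilon_k$ at $\bm\varepsilon=0$ to obtain a linear parabolic system for the first variations $(u^{j,(k)},v^{j,(k)})$ whose coefficients are the first-order partial derivatives of $F^j,G^j$ evaluated at the background. The hypothesis $\mathcal{M}^+_{F^1,G^1}=\mathcal{M}^+_{F^2,G^2}$ forces the lateral Cauchy data $(u^{j,(k)},\partial_\nu u^{j,(k)})|_\Sigma$ to agree for $j=1,2$; an application of unique continuation (or a suitable Carleman estimate) for the linearised parabolic equation then gives equality of the first-order coefficients of $F$ along the set of states reached by first variations, and since $\mathcal{S}$ is chosen so that this set --- and, crucially, its pointwise products --- is dense in the relevant space, one concludes that the first-order Taylor coefficients of $F$ (and likewise $G$) at the background coincide.

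The argument then proceeds by induction on the order of variation. Differentiating the system $m$ times in $\varepsilon_{k_1},\dots,\varepsilon_{k_m}$ at $\bm\varepsilon=0$ produces a linearised parabolic system for the $m$-th variation whose source is a sum of products of lower-order variations multiplied by the $m$-th order Taylor coefficients of $F$ and $G$ at the background. Subtracting the two configurations, inserting the already-established equality of all lower-order coefficients, and using that the Cauchy data of the difference vanish, one arrives at an integral identity of the schematic form $\int \big(\delta^{(m)}F\big)\,w\,\textstyle\prod_\ell u^{(k_\ell)}=0$ for every choice of admissible first variations $u^{(k_\ell)}$ and every solution $w$ of the adjoint linear equation; completeness of such products in $L^1$ then forces $\delta^{(m)}F\equiv 0$, i.e. the $m$-th order Taylor coefficients of $F^1$ and $F^2$ coincide, and similarly for $G$. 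Iterating over $m$ and then invoking analyticity, which is built into the admissible class $\mathcal{A}$, upgrades the agreement of all Taylor coefficients to $F^1\equiv F^2$ and $G^1\equiv G^2$, which is exactly \eqref{MainUniquessnessProb}.

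I expect the genuine difficulty to lie in two places. The first is designing the high-order variation so that the $\varepsilon_k$-derivatives survive as bona fide one-sided derivatives while the solution never leaves the non-negative cone: this requires sharp quantitative control, via \eqref{CoerCond} and parabolic smoothing, of how large the increments $g^{(k)}$ may be and on what time interval positivity persists. The second, and probably harder, is establishing the density of the products $\prod_\ell u^{(k_\ell)}$ of solutions of the linearised parabolic equations in the relevant function space --- precisely the ``function spaces generated by products of non-negative solutions'' issue flagged in the abstract --- which I would handle by constructing a sufficiently rich family of special parabolic solutions (exponential/CGO-type, or obtained by Runge approximation) that remain compatible with the sign constraint imposed by anchoring at the positive background.
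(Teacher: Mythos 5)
Your proposal diverges from the paper at the very first step, and the divergence matters. You anchor the high-order variation at a strictly positive background $(c_1,c_2)$; but the admissible class here only guarantees that $(0,0)$ is a common solution ($F(x,t,0,0)=G(x,t,0,0)=0$), and a positive stationary pair shared by both configurations generally does not exist and is not assumed. The paper's actual device is to linearise around the \emph{trivial} pair $(0,0)$ while taking the Dirichlet input in the rigid form $u|_\Sigma=\sum_{l\ge1}\varepsilon^l f_l$ with $f_1>0$: for small $\varepsilon>0$ the first-order term dominates, so the solution stays non-negative, and $u^{(1)}$ is strictly positive by the maximum principle. The price of positivity is therefore paid entirely by the first-order input --- $f_1$ is sign-constrained and essentially fixed, while only $f_l$, $l\ge2$, are free. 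This kills your key step: in the $m$-th order integral identity the product of variations is $[u^{(1)}]^{m-1}u^{(2)}$ with $u^{(1)}$ a single fixed positive solution and only $u^{(2)}$ at your disposal, so there is no ``completeness of products of many independent solutions'' to invoke. The paper instead chooses one CGO solution for $u^{(2)}$, applies a Fourier-transform/Weierstrass argument to get $(\delta\alpha)\,[u^{(1)}]^{m-1}=0$, and then divides out $[u^{(1)}]^{m-1}$ using its strict positivity.

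Because of this, and because the data are partial (only $u|_\Sigma$ is varied and only $(u,\partial_\nu u)|_\Sigma$ is measured, with $v$'s boundary datum fixed and $v^{(1)}$ not freely controllable), your claimed conclusion --- recovery of \emph{all} Taylor coefficients of $F$ and $G$ --- is stronger than what the method, or the paper, delivers. The precise form of this theorem (Theorem~\ref{MainThm}) recovers only the coefficients $\beta_{10}$, $\alpha_{m0}$, $\beta_{m0}$ of pure powers of $u$, one at a time, and only under structural hypotheses that the lower-order mixed coefficients vanish and the same-order ones are known. Your induction ``insert equality of all lower-order coefficients and conclude for order $m$'' silently assumes those lower-order coefficients are recoverable, which they are not in this setup; that is precisely the gap the paper's assumptions \eqref{ThmAssump2}--\eqref{ThmAssump} are designed to close.
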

We shall establish sufficient conditions such that Theorem~\ref{FormalThm} holds in a certain general setup. In particular, we shall provide general characterisations of $F, G$ and $\mathcal{A}, \mathcal{S}$, and connect them to specific applications. The major novelty that distinguishes our inverse problem study from most of the existing ones lies at the following two aspects. First, we consider inverse boundary problems for coupled nonlinear parabolic partial differential equations (PDEs) with partial data (in a certain sense to be made more precise in what follows), using the high-order successive linearisation method. Second, the solutions of the PDEs are required to be non-negative, and to that end, we innovate the successive linearisation method by further developing a high-order variation scheme. For a theoretical perspective, our study addresses an important topic in PDE analysis on how to characterise the function spaces generated by the products of non-positive solutions of parabolic PDEs.  More discussion on these aspects shall be given in Sections~\ref{sect:motivation} and \eqref{sect:1.3}.

\subsection{Motivation and Background}\label{sect:motivation}

Nonlinear parabolic PDEs are used to describe a variety of physical systems. Some examples are listed in Table \ref{table1} below, including both single parabolic equations and coupled parabolic systems; see Chapter 12 of \cite{PaoNonlinearBook} for more explanation of Examples 3--7.

The equations above are commonly derived from a statistical physics viewpoint, in that the interacting agents attempt to minimise a cost functional on the macro scale via an expectation function, such that the value functions $u$ and $v$ represent the averaged-out probability density or population density of the agents. Despite that $u$ and $v$ may have different meanings depending on the physical contexts the equations are derived from, they all have a common root, and are all intrinsically non-negative.

Our problem \eqref{MainUniquessnessProb} aims to identify the uniqueness of the external parameters in these models, such as the Hamiltonian function and running cost of the mean field games system, or the reacting functions between two chemicals or population species. In cases such as the Fokker-Planck equation where the value function satisfying a single parabolic equation is a vector, we are also able to determine each component of the source functions. Furthermore, in these examples, the value functions $u$ and $v$ are required to be positive (or non-negative), since they are the value functions of objects. This positivity requirement has not been considered in other works on inverse problems, except for another previous work \cite{LiuZhang2022-InversePbMFG} of the first author. This makes our inverse problem study more realistic from a practical point of view, compared to other studies. This positivity requirement is fulfilled via a careful choice of the boundary data in our recovery of environmental forcing functions $F$ and $G$, as we will explain in the later sections.

\begin{center}
\begin{longtable}{|m{5em}|c|m{12em}|}
  \hline
  Types of PDEs & Equation & Description\\
  \hline
  \hline
  Fokker-Planck equation & \makecell{$u_t = L^*u$,\\ $L\varphi:=tr(A\Delta\varphi)+\langle b,\nabla\varphi\rangle + c\varphi$} & $L^*$ is the adjoint operator of $L$, $A$ corresponds to addictive noise (see, for instance \cite{FokkerPlanckNoise} for more detailed discussion about noise) \\ 
  \hline
  Mean field games & \makecell{$-u_t - \Delta u + H(x,\nabla u) = F(x,t,m)$,\\ $m_t - \Delta m - \nabla\cdot(m\nabla_pH(x,\nabla u)) = 0$} & $u$ is the value function of each player, $m$ describes the population distribution, $H$ is a Hamiltonian, and $F$ is the running cost function \\ 
  \hline
  Gas-liquid interaction problems & \makecell{$u_t -D_1\Delta u = f_1(x,u,v)$,\\ $v_t -D_2\Delta v = f_2(x,u,v)$,\\$f_i(x,u,v)=-\sigma_iu^mv^n+q_i(x)$,\\$m,n\geq1, q_i(x)\geq0, i=1,2$} & $D_1,D_2$ are constant diffusion coefficients, $u,v$ represent the dissolved gas and reactant respectively, $f_i(x,u,v)$ describes the $(m,n)$-th order reaction for $m,n\geq1$  \\ 
  \hline
  Belousov-Zhabotinskii oregonator model & \makecell{$u_t -D_1\Delta u = u(a-bu-cv)$,\\ $v_t -D_2\Delta v = -c'uv$} & $D_1,D_2$ are constant diffusion coefficients, $u,v$ represent the concentrations of the metal $X$ and ion $Y$, and $a,b,c,c'>0$ are forward rate constants \\ 
  \hline
  Volterra-Lotka model & \makecell{$u_t -D_1\Delta u = u(a_1-b_1u\pm c_1v)$,\\ $v_t -D_2\Delta v = v(a_2\pm b_2u-c_2v)$} & $D_1,D_2$ are constant diffusion coefficients, $u,v$ represent two competing species, and $a_i,b_i,c_i>0$ are interaction constants \\ 
  \hline
  Epidemic Kermack-McKendrick equation & \makecell{$u_t -D_1\Delta u = -a_1u-b_1u\int_\Omega K(x,\xi)v(t,\xi)d\xi$,\\ $v_t -D_2\Delta v = -a_2u-b_2u\int_\Omega K(x,\xi)v(t,\xi)d\xi$} & $D_1,D_2$ are constant diffusion coefficients, $u,v$ represent the susceptible and infective populations respectively, $a_i,b_i$ are rate constants, and $K(x,\xi)$ is a transfer function \\ 
  \hline 
  Adiabatic neuron transport equation & \makecell{$u_t -D_1\Delta u = u(a-bv)$,\\ $v_t -D_2\Delta v = cu$} & $D_1,D_2,a,b,c>0$ \\ 
  \hline
  \caption[]{Examples of Nonlinear Parabolic Equations or Systems of Positive Value Functions\label{table1}}
\end{longtable}

\end{center}

Clearly, our analysis can also be applied to nonlinear parabolic equations or systems that do not require positivity, such as in the well-known examples in Table \ref{table2} below. Yet, we recall that positive solutions tend to possess more properties in many nonlinear parabolic equations, including existence and regularity results and maximum principles, so our study of the inverse problem for positive solutions has considerable importance too in these cases. 

\begin{center}
\begin{longtable}[h]{|m{5em}|c|m{16em}|} 
  \hline
  Types of PDEs & Equation & Description\\
  \hline
  \hline
  Burgers' equation & $\bm{u}_t + \bm{u}\bm{u}_x = \nu \Delta \bm{u} + \bm{f}(x,t)$
   & $\bm{u}$ is the vectorial fluid velocity, $\nu$ is the viscosity coefficient, and $\bm{f}(x,t)$ is an external force \\
  \hline
  Allen-Cahn equation & $\phi_t = \epsilon^2 \Delta \phi - \frac{1}{\epsilon^2} W’(\phi)$ & $\phi$ is the order parameter, $\epsilon$ denotes the interfacial thickness, and $W(\phi)$ is a double-well potential \\ 
  \hline
  Fisher-KPP equation & $u_t - D \Delta u = F(u)$ & $u$ is the population density, $D$ is the diffusion coefficient, and $F(u)$ describes the growth rate \\ 
  \hline
  Nonlinear Schr\"odinger equation & $\mathrm{i}\bm{\psi}_t = - \frac{1}{2}\Delta \bm{\psi} + \kappa |\bm{\psi}|^2\bm{\psi}$ & $\bm{\psi}$ is the complex vectorial wave field, and $\kappa$ describes the focusing/defocusing of the solution \\ 
  \hline
  Hamilton-Jacobi equation & $S_t = -H(x,\nabla S,t)$ & $H$ is the Hamiltonian of a mechanical system $S$ \\ 
  \hline
  \caption[]{Examples of Other Nonlinear Parabolic Equations \label{table2}}
\end{longtable}
\end{center}

Indeed, the external parameters and source functions are one of the key parameters in parabolic systems such as in the examples above. However, in practice these external environmental factors are often unknown or only partially known for the involved agents (such as fluid particles, interacting species or reactants), and it is also impossible to measure the value functions of the agents at all points inside a region, even if the chosen domain is bounded with enough regularity. Instead, we can only measure the value functions at the boundary of some chosen domain. This motivates us to consider the inverse problem \eqref{MainUniquessnessProb} of determining these external parameters $F$ and $G$ by measurement/knowledge of the boundary data of the parabolic system that results from optimal actions, which is formally given as Theorem \ref{FormalThm}.

\subsection{Technical developments and discussion}\label{sect:1.3}
There has been much interest in inverse problems for coupled parabolic systems, and we refer to \cite{IY1998,CGR2006,BCGPY2009,BFM2014,ACM2022} and the references therein for examples in the literature of various such works. However, most of these works aim to recover the conductivity of the system, which is related to the self-diffusion coefficients of Section \ref{sect:motivation} when the PDE is written in non-divergence form. To the best of our knowledge, our work is the first attempt to recover coefficients of nonlinear coupled terms for general parabolic systems. As such, our work has a wider range of applications compared to the formulations given in these references, and the results we present are more versatile. 

Furthermore, a major technical consideration we have in this article is the enforcement of positivity (or non-negativity) for the solutions, which is crucial in many physical applications such as population models, as we will see in Section \ref{sect:app}. Moreover, many previous works on the forward problem for parabolic PDEs require the positivity of solutions to obtain further properties of the solutions including uniqueness, regularity and maximum principles (see, for instance, \cite{Positive2,PaoNonlinearBook,Positive1}). 

Despite the importance of having positivity in solutions, we emphasise that other than \cite{LiuZhang2022-InversePbMFG} and \cite{LiuZhangMFG3}, there has not yet been any work on inverse boundary problems which take into consideration the positivity of the solutions, which has important implications when applied to physical real-world models. Moreover, unlike \cite{LiuZhang2022-InversePbMFG}, our method allows the usage of the classical successive/high-order linearisation technique around a pair of trivial solutions $(0, 0)$, yet maintaining positivity of $u$ and $v$. The main idea is to consider a specific form for the input boundary value describing higher-order variation, given by 
\begin{equation}\label{UExpand}u(x,t;\varepsilon)=\sum_{l=1}^\infty\varepsilon^l f_l\quad \text{ on }\Sigma\quad \text{ for }f_1>0.\end{equation} 
Here, $f_2(x,t)$ may possibly be positive or negative at different $x,t$, but for all small positive $\varepsilon$, the positivity of $f_1$ ensures that $u(x,t;\varepsilon)>0$ on $\Sigma$. 

In fact, by allowing the input to have high-order variations $\varepsilon^l$, $l\geq2$, with respect to an asymptotic parameter $\varepsilon$, this approach allows us to primarily concentrate on the higher-order linearised system but not the first-order one. Moreover, the input of the first-order linearisation system is forced to be nonnegative, and hence the input of the higher-order linearisation system can be arbitrary. 
This is in contrast to \cite{LiuZhang2022-InversePbMFG}, which takes into account the high-order linearisation around $(0,1)$. This linearisation around non-trivial solutions restricts the usage of the technique to certain PDEs which have the nontrivial solution $(0,1)$. 
On the other hand, the technique of linearising around a pair of trivial solutions has already been extensively used in a variety of inverse problems associated with nonlinear PDEs; see e.g. \cite{ LassasLiimatainenLinSalo2021InversePbEllipticPowerNonlinear,LinLiuLiuZhang2021-InversePbSemilinearParabolic-CGOSolnsSuccessiveLinearisation,LiuMouZhang2022-InverseProblemsMeanFieldGames,LiimatainenLin2022-SemilinearClassicalInverseProblem} and the references cited therein. As such, our technique which innovates the method of high-order linearisation around a pair of trivial solutions, yet at the same time maintaining positivity, makes our result not only physically realistic, but also highly applicable to a wide range of PDEs, including the ones in Section \ref{sect:motivation} as well as those in these references cited. To the best of our knowledge, our method is the second only possible solution available, and currently the more versatile one. We believe that these technical developments shall be useful for tackling nonlinear inverse problems related to various physical systems where positivity is intrinsic in the system.

Indeed, this technical constraint of positivity makes the corresponding inverse problem radically more challenging, since one would need to construct suitable ``probing modes", such as \eqref{UExpand}, which fulfil this constraint. If we suppose instead that positivity is not required, such as in the Burgers' equation, Allen-Cahn equation, nonlinear Schr\"odinger equation or the Hamilton-Jacobi equation of Section \eqref{sect:motivation}, our method of high-order linearisation can be easily extended as in Theorem 1.3 of \cite{LinLiuLiuZhang2021-InversePbSemilinearParabolic-CGOSolnsSuccessiveLinearisation} to determine much more information about $F$ and $G$.

At the same time, it should be noted that in our case, our measurement map only involves $u$, and no information is required for $v$. This is different from that considered in \cite{LinLiuLiuZhang2021-InversePbSemilinearParabolic-CGOSolnsSuccessiveLinearisation} or \cite{LiuZhang2022-InversePbMFG}. As such, our result can, in fact, be viewed as an inverse problem involving partial data.

Finally, we note that the mathematical study of inverse problems associated with nonlinear PDEs have received considerable interest in the literature recently; see, for example, \cite{LassasLiimatainenLinSalo2021InversePbEllipticPowerNonlinear,LassasLiimatainenLinSalo2021InversePbEllipticSemilinear,LinLiuLiuZhang2021-InversePbSemilinearParabolic-CGOSolnsSuccessiveLinearisation} and the references therein. Yet, there are several salient features of our study that are worth highlighting. First, the nonlinear parabolic systems we consider are very general, and include a diverse range of time-dependent physical models. Our method enables us to determine the time-dependent environmental factors involved in these various nonlinear parabolic systems, though with some limitations. Secondly, we innovate the classic high-order linearisation method around a trivial pair of solutions which can ensure the positivity of the solutions. We believe that the mathematical strategies  and techniques developed in this article, as well as those in \cite{LiuZhangMFG3}, offer novel perspectives on inverse boundary problems in those new and intriguing contexts and have multiple physical applications in tackling inverse problems associated with coupled nonlinear PDEs in different contexts, to produce more results of both theoretical and practical importance.

The rest of the paper is organised as follows. In Section \ref{sect:results}, we fix some notations and introduce several auxiliary results leading up to the main result of the inverse problem, which is proved in Section \ref{sect:ResultsProof}. Finally, we will discuss some physical applications in Section \ref{sect:app}, as well as possible future directions in Section \ref{sect:discuss}.

\section{Preliminaries and Statement of Main Results}\label{sect:results}

\subsection{Mathematical Setup}
We restrict our study to nonlinear system of equations of the following form:

\begin{equation}\label{MainPDE}
    \begin{cases}
        \partial_t u(x,t) - \mu\Delta u(x,t) = F(x,t,u,v) &\quad \text{in }Q,\\
        \partial_t v(x,t) - \nu\Delta v(x,t) = G(x,t,u,v)  &\quad \text{in }Q,\\
        u,v\geq0    &\quad \text{in }Q,\\
        u(x,0)=u_0(x)\geq0,\quad v(x,0) = v_0(x)\geq0  &\quad \text{in }\Omega, \\
        u = f\geq0,\quad v = g\geq0 &\quad \text{on }\Sigma
    \end{cases}
\end{equation}
where $Q:=\Omega\times(0,T)$ for a bounded Lipschitz domain $\Omega\subset\mathbb{R}^n$, $\Sigma:=\partial\Omega\times(0,T)$, $T\in(0,\infty]$. Here, $\mu,\nu>0$ are positive constants, which may represent the viscosity coefficient in the Burgers' equation, thickness of the layer between two phases in the Allen-Cahn equation, additive noise in the Fokker-Planck equation, or diffusion coefficients in population or chemical models of Section \ref{sect:motivation}. The functions $F(x,t,p,q),G(x,t,p,q):\Omega\times (0,T) \times \mathbb{R}\times \mathbb{R}\to \mathbb{R}$ are analytic with respect to $p$ and $q$, and are of the form
\begin{equation}\label{Fform}F(x,t,p,q):= \sum_{\substack{m,n\geq0\\m+n\geq3}}^\infty \alpha_{mn}(x,t) p^m q^n\end{equation} and \begin{equation}\label{Gform}G(x,t,p,q):= \sum_{\substack{m,n\geq0\\m+n\geq1}}^\infty \beta_{mn}(x,t) p^m q^n,\end{equation}
such that the following coercivity condition 
\begin{equation}\label{CoerCond}\beta_{01}(x,t)\leq0\end{equation} is satisfied. 
The Dirichlet boundary conditions given here signify that the players can freely enter and leave the domain $\Omega$ via its boundary. We note that the strategy we will develop in this paper makes full use of the intrinsic structure of the parabolic system described by \eqref{MainPDE} and \eqref{Fform}--\eqref{Gform}.

Here, we would like to remark that we shall not require that $\int_\Omega u$ or $\int_\Omega v$ is equal to $1$, though $u,v$ are treated as probability or population densities. Indeed, as in \cite{LiuZhangMFG3}, one can consider the scenario that the domain consists of a family of disjoint subdomains, say $\omega_i$, $i\in\mathbb{N}$, such that the overall density on $\cup_i \omega_i$ is 1, namely $\int_{\cup_i \omega_i} u=\int_{\cup_i \omega_i} v=1$. Though those subdomains are disjoint, the agents within each subdomain can interact with those in other subdomains, say e.g. when our domain is an arbitrarily defined area within a big forest. Hence if $\Omega$ is taken to be any one of those subdomains, i.e. $\omega_i$, it is not necessary to require that $\int_\Omega u=\int_\Omega u=1$, as long as $u,v$ are nonnegative.

As explained in the previous Section, we are mainly concerned with the inverse problem of determining the coefficients $\alpha_{mn}$ and $\beta_{mn}$, using knowledge of $u$ at the boundary of some bounded domain $\Sigma$. Translated to physical terms, this means that we assume that all agents follow the parabolic system \eqref{MainPDE} (i.e. on a macro scale, they follow laws of nature), and the observer only knows the value functions of the agents at the boundary of some chosen domain. The main goal is to recover some information regarding the environment, such as source functions or forcing functions. As such, we introduce the measurement map $\mathcal{M}^+_{F,G}$
\[\mathcal{M}^+_{F,G}(\left.u\right\rvert_\Sigma) = \left. \left( u(x,t),\partial_\nu u(x,t)\right) \right\rvert_\Sigma.\]
To be more specific, one can think of measuring/observing the space-time boundary data
of $u$, from which we can determine the interacting functions $F$ and $G$ over the space-time domain $Q$.

In particular, we are mainly concerned with the unique identifiability issue, which asks whether one can establish the following one-to-one correspondence:
\[\mathcal{M}^+_{F^1,G^1} = \mathcal{M}^+_{F^2,G^2} \quad\text{ if and only if }\quad (F^1,G^1) = (F^2,G^2)\] two configurations $(F^j,G^j)$, $j = 1, 2$.

We first give the result for the corresponding forward problem. For $k\in\mathbb{N}$ and $0<\alpha<1$, recall that the H\"older space $C^{k+\alpha}(\overline{\Omega_1})$ is defined as the subspace of $C^{k}(\overline{\Omega_1})$ such that $\phi\in C^{k+\alpha}(\overline{\Omega_1})$ if and only if $D^l\phi$ exist and are H\"older continuous with exponent $\alpha$ for all $l=(l_1,l_2,\dots,l_n)\in\mathbb{N}^n$ with $|l|\leq k$, where $D^l:=\partial^{l_1}_{x_1}\partial^{l_2}_{x_2}\cdots\partial^{l_n}_{x_n}$ for $x=(x_1,\cdots,x_n)$. The corresponding norm is defined by
\[\norm{\phi}_{C^{k+\alpha}(\overline{\Omega_1})}:=\sum_{|l|\leq k}\norm{D^l\phi}_\infty+\sum_{|l|= k}\sup_{x\neq y}\frac{|D^l\phi(x)-D^l\phi(y)|}{|x-y|^\alpha}\] for the $L^\infty$ sup norm $\norm{\cdot}_\infty$. If $\phi$ depends on both the time and space variables, we write $\phi\in C^{k+\alpha,l+\beta}(Q_1)$ if $\phi$ is $C^{k+\alpha}$ in space and $C^{l+\beta}$ in time, endowed with the natural norm.

We first observe that $(0,0)$ is a solution to the problem \eqref{MainPDE}. Furthermore, we recall the following classical well-posedness result of the forward nonlinear parabolic problem, which gives the infinite differentiability of the system with respect to small variations of the given boundary data input $f$. This can be found in \cite{LadyzhenskayaSolonnikovUraltsevaBook} as Theorem VII.7.1:

\begin{theorem}\label{MainThmForwardPb}
    Suppose that the first derivatives of $F, G$ are continuous with respect to $x,t,u,v$. For $\alpha\in(0,1)$, assume $u_0,v_0\in C^{2+\alpha}(\bar{\Omega})$, $f,g\in C^{2+\alpha,1+\alpha/2}(\bar{\Sigma})$ such that $u_0,v_0,f,g\geq0$ with the compatibility conditions
    \[u_0(x)=f(x,0)\text{ and } f_t(x,0)=\mu\Delta u_0(x) + F(x,0,u_0(x),v_0(x))\quad\text{ on }\Sigma\] and
    \[v_0(x)=g(x,0)\text{ and } g_t(x,0)=\nu\Delta v_0(x) + G(x,0,u_0(x),v_0(x))\quad\text{ on }\Sigma.\]
    Then, the system \eqref{MainPDE} admits a unique non-negative solution \[(u,v)\in [C^{2+\alpha,1+\alpha/2}(\bar{Q})]^2.\] 
    %Moreover, there exists a constant $C>0$ such that
    %\[\norm{u}_{C^{2+\alpha,1+\alpha/2}(\bar{Q})}+\norm{v}_{C^{2+\alpha,1+\alpha/2}(\bar{Q})}\leq \norm{}\]
\end{theorem}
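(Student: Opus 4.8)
The statement to prove is Theorem~\ref{MainThmForwardPb}, a well-posedness result for the forward problem \eqref{MainPDE}. Since the paper itself cites this as Theorem VII.7.1 of \cite{LadyzhenskayaSolonnikovUraltsevaBook}, the "proof" is really a matter of reducing the coupled system \eqref{MainPDE} to the standard framework of quasilinear parabolic equations covered there, plus verifying that the non-negativity claim follows from the structure in \eqref{Fform}--\eqref{Gform} and the coercivity condition \eqref{CoerCond}. I would organise the argument in three stages.

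\textbf{Stage 1: Existence, uniqueness and regularity.} The plan is to view \eqref{MainPDE} as a single parabolic system for the vector $w=(u,v)$ with a diagonal principal part $\mathrm{diag}(\mu,\nu)\Delta$ and lower-order (in fact zeroth-order in the derivatives) coupling through $F$ and $G$. Under the hypothesis that $F,G$ have continuous first derivatives in $(x,t,u,v)$, the right-hand sides are $C^1$ in their arguments and, once one restricts attention to a bounded set of $(u,v)$-values, they satisfy the structural and smoothness conditions required by the Schauder theory for parabolic equations in Hölder spaces. The compatibility conditions stated in the theorem are exactly the zeroth- and first-order compatibility conditions at the parabolic corner $\partial\Omega\times\{0\}$ needed for a solution of class $C^{2+\alpha,1+\alpha/2}(\bar Q)$. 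I would therefore invoke Theorem VII.7.1 of \cite{LadyzhenskayaSolonnikovUraltsevaBook} (applied componentwise, treating the coupling terms as given once a fixed-point/contraction argument in $[C^{2+\alpha,1+\alpha/2}(\bar Q)]^2$ is set up, or directly in the vector-valued form) to obtain a unique solution $(u,v)\in[C^{2+\alpha,1+\alpha/2}(\bar Q)]^2$ on $\bar Q$, using that $u_0,v_0\in C^{2+\alpha}(\bar\Omega)$ and $f,g\in C^{2+\alpha,1+\alpha/2}(\bar\Sigma)$.

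\textbf{Stage 2: Non-negativity.} This is the one genuinely structural point, and where the hypotheses \eqref{Fform}--\eqref{Gform} and \eqref{CoerCond} come in. I would argue by a maximum-principle / comparison argument. For $v$: write $G(x,t,u,v)=\beta_{01}(x,t)v+R(x,t,u,v)$ where, by \eqref{Gform}, every remaining term has a factor $p^mq^n$ with $m+n\ge1$ and $(m,n)\ne(0,1)$; one checks that $R(x,t,u,0)=0$ pointwise (every surviving monomial contains at least one power of $q=v$, since the only monomial linear in $v$ alone was split off, and monomials with $m\ge1$ vanish... ), so $v\equiv 0$ is a subsolution-barrier and the equation for $v$ has the form $\partial_t v-\nu\Delta v=c(x,t)v$ with $c(x,t)=\beta_{01}(x,t)+(\text{terms vanishing when }v=0)$; combined with $v_0\ge0$ and $g\ge0$, the parabolic maximum principle (using $\beta_{01}\le0$ from \eqref{CoerCond} to control the sign, or more simply the fact that $c$ is bounded once $v$ is known to be bounded via Stage 1) forces $v\ge0$ on $\bar Q$. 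For $u$: by \eqref{Fform} every monomial in $F$ has $m+n\ge3$, so $F(x,t,0,v)=0$ and $F(x,t,u,v)=d(x,t,u,v)\,u$ with $d$ bounded on the relevant range; then $u\equiv 0$, $u_0\ge0$, $f\ge0$ and the maximum principle give $u\ge0$. One must be a little careful to run this as a comparison between the actual solution (already known to exist and be bounded from Stage 1, so that $c$ and $d$ are bounded coefficients) and the zero function, rather than circularly; alternatively, set up the fixed-point iteration so each iterate is non-negative and pass to the limit.

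\textbf{Main obstacle.} The only real subtlety is Stage 2: verifying rigorously that $F$ and $G$ factor through $u$ and $v$ respectively in the way needed (i.e.\ $F(x,t,0,v)\equiv 0$ and $G(x,t,u,0)\equiv 0$) from the summation constraints $m+n\ge3$ in \eqref{Fform} and $m+n\ge1$ with the $\beta_{01}v$ term isolated in \eqref{Gform}, and then that the resulting "coefficients" $d(x,t,u,v)$ and $\beta_{01}+(\cdots)$ are bounded on the range swept out by the solution so that the scalar parabolic maximum principle applies; the sign condition \eqref{CoerCond} is what guarantees the $v$-comparison is not spoiled by an unfavourable zeroth-order term. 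Everything else — existence, uniqueness, the $C^{2+\alpha,1+\alpha/2}$ regularity, the role of the compatibility conditions — is a direct citation of the classical Schauder theory in \cite{LadyzhenskayaSolonnikovUraltsevaBook}, so I would keep that part brief and concentrate the write-up on the non-negativity argument.
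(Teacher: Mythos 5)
Your Stage 1 is exactly what the paper does: Theorem~\ref{MainThmForwardPb} is not proved in the paper at all, but is quoted directly as Theorem VII.7.1 of \cite{LadyzhenskayaSolonnikovUraltsevaBook}, with the stated compatibility conditions playing precisely the corner-compatibility role you describe. So the existence/uniqueness/regularity part of your write-up matches the paper's (one-line) argument and can stay as a citation.

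The problem is your Stage 2, which contains a genuine gap. The factorisations you rely on do not follow from \eqref{Fform}--\eqref{Gform}. The constraint in \eqref{Fform} is only $m+n\ge 3$ with $m,n\ge 0$, so $F$ may contain pure powers of $q$, e.g.\ $\alpha_{03}(x,t)q^3$; hence $F(x,t,0,v)=\alpha_{03}v^3+\alpha_{04}v^4+\cdots$ need not vanish, and $F$ does not factor as $d(x,t,u,v)\,u$. Symmetrically, \eqref{Gform} allows $\beta_{10}(x,t)p$, $\beta_{20}(x,t)p^2$, etc., so after splitting off $\beta_{01}v$ the remainder $R$ satisfies $R(x,t,u,0)=\beta_{10}u+\beta_{20}u^2+\cdots\ne 0$ in general --- indeed $\beta_{10}$ is one of the coefficients the paper's main theorem sets out to recover, so it cannot be assumed away. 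Consequently neither component can be compared with the zero function on its own: the $u$-equation is forced by $v$ through the $\alpha_{0n}v^n$ terms and the $v$-equation by $u$ through the $\beta_{m0}u^m$ terms, and without sign conditions on those coefficients (e.g.\ $\beta_{10}<0$ with $u>0$, $v_0=g=0$ drives $v$ negative) the decoupled maximum-principle argument simply does not close. The coercivity condition \eqref{CoerCond} controls only the $\beta_{01}v$ term and does not rescue this. If you want to keep a non-negativity discussion, you must either impose sign hypotheses on the off-diagonal coefficients and run a genuinely coupled comparison/invariant-region argument, or do as the paper does and treat non-negativity as part of the formulation of \eqref{MainPDE} guaranteed by the choice of non-negative data within the cited classical theory, rather than as a consequence you derive from \eqref{Fform}--\eqref{Gform} alone.
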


\subsection{Main Results}

Suppose $F$ and $G$ are analytic.
\begin{definition}
    We say that $U(x,t,p,q):\mathbb{R}^n\times\mathbb{R}\times\mathbb{C}\times\mathbb{C}\to\mathbb{C}$ is admissible, denoted by $U\in\mathcal{A}$, if:
    \begin{enumerate}[label=(\alph*)]
        \item The map $z\mapsto U(\cdot,\cdot,p,q)$ is holomorphic with value in $C^{2+\alpha,1+\alpha/2}(\bar{Q})$ for some $\alpha\in(0,1)$,
        \item $U(x,t,0,0)=0$ for all $(x,t)\in Q$.
    \end{enumerate}
    It is clear that if $U$ satisfies these two conditions, $U$ can be expanded into a power series
    \[U(x,z)=\sum_{m,n=1}^\infty U^{(m,n)}(x)\frac{p^m q^n}{(m+n)!},\]
    where $U^{(m,n)}(x,t)=\frac{\partial^m }{\partial p^m}\frac{\partial^n}{\partial q^n}U(x,t,0)\in C^{2+\alpha,1+\alpha/2}(\bar{Q})$.
\end{definition}

This admissibility condition is imposed a priori on $F$ and $G$, by extending these functions of real variables to the complex plane with respect to the $z$-variable, given by $\tilde{F}(\cdot,\cdot,p,q)$ and $\tilde{G}(\cdot,\cdot,p,q)$ respectively, and assume that they are holomorphic as functions of the complex variables $p,q$. Then, $F$ and $G$ are simply the restrictions of $\tilde{F}$ and $\tilde{G}$ to the real line respectively. Furthermore, since the image of $F$ and $G$ are in $\mathbb{R}$, we can assume that the series expansions of $\tilde{F}$ and $\tilde{G}$ are real-valued.

We are now in a position to present our main results:

\begin{theorem}\label{MainThm}
    Let $\mathcal{M}^+_{F^j,G^j}$ be the measurement map associated to \eqref{MainPDE} for $j=1,2$. Assume $F^j,G^j\in\mathcal{A}$ such that \eqref{CoerCond} holds.
    Suppose, for any 
    \[u(x,t)=\sum_{l=1}^\infty\varepsilon^l f_l\quad \text{ on }\Sigma\] 
    where $f_l\in C^{2+\alpha,1+\alpha/2}(\bar{\Sigma})$ with $|\varepsilon|$ small enough such that $f_1(x,0)=u_0(x)$ and $f_l(x,0)=0$ for $l\geq2$,
    one has
    \[\mathcal{M}^+_{F^1,G^1}(u|_{\partial\Omega})=\mathcal{M}^+_{F^2,G^2}(u|_{\partial\Omega})\quad \mbox{for all}\ u|_{\partial\Omega}\in\mathcal{S}:=C^{2+\alpha,1+\alpha/2}(\bar{\Sigma}).\]
    Fix $m=1,\dots M$, $M<\infty$.
    \begin{enumerate}
        \item For $m=1$, if $\beta_{11}=\beta_{02}\equiv0$, and $\beta_{01},\beta_{20}$ are known and fixed, then 
        \begin{equation}\beta_{10}^1(x,t)=\beta_{10}^2(x,t)\quad\text{ in }Q.\end{equation}
        \item For $m\geq2$, if 
        \begin{equation}\label{ThmAssump2}\alpha_{m_1n_1}\equiv0\quad\text{ for all }2\leq m_1+n_1\leq m, m_1\neq m,\end{equation} 
        and $\alpha_{m_2n_2}$ are known and fixed for all $m_2+n_2 = m+1$, as well as $\beta_{10}$, then it holds that 
        \begin{equation}\alpha_{m0}^1(x,t)=\alpha_{m0}^2(x,t)\quad\text{ in }Q.\end{equation}
        \item For $m\geq2$, suppose $\alpha_{mn}$ are known and fixed for all $m,n$, 
        \begin{equation}\label{ThmAssump3}\beta_{m_1n_1}\equiv0\quad\text{ for all }2\leq m_1+n_1\leq m, m_1\neq m,\end{equation} 
        and $\beta_{m_2,n_2}$ are known and fixed for all $m_2+n_2 = m+1$ or $m_2+n_2 \leq 1$. If, in addition, 
        \begin{equation}\label{ThmAssump}\text{either }\quad \alpha_{m_1n_1}\equiv0\text{ for all }2\leq m_1+n_1\leq m\quad\text{ or }\quad\beta_{10}\equiv0,\end{equation} 
        then it holds that 
        \begin{equation}\beta_{m0}^1(x,t)=\beta_{m0}^2(x,t)\quad\text{ in }Q.\end{equation}
    \end{enumerate}
    
\end{theorem}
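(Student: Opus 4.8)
The plan is to use the high-order linearisation scheme dictated by the probing input $u(x,t;\varepsilon)=\sum_{l\ge1}\varepsilon^l f_l$ on $\Sigma$, differentiate the system \eqref{MainPDE} repeatedly in $\varepsilon$ at $\varepsilon=0$, and extract at each order a linear parabolic system whose source terms carry the unknown coefficients. Write $u=\sum_{l\ge1}\varepsilon^l u_l$, $v=\sum_{l\ge1}\varepsilon^l v_l$ for the induced expansions of the solution; by Theorem~\ref{MainThmForwardPb} and the holomorphic dependence in the admissibility condition, these series converge in $[C^{2+\alpha,1+\alpha/2}(\bar Q)]^2$ for small $|\varepsilon|$, and each $(u_l,v_l)$ solves a parabolic problem obtained by matching powers of $\varepsilon$. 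The crucial structural point is that, because $F$ has no terms of order $<3$ (see \eqref{Fform}) while $G$ may have linear and quadratic terms (see \eqref{Gform}), the first-order system is $\partial_t u_1-\mu\Delta u_1=0$, $\partial_t v_1-\nu\Delta v_1=\beta_{10}u_1+\beta_{01}v_1$ with $u_1=f_1\ge0$, $v_1=g\ge0$ on $\Sigma$; the coercivity \eqref{CoerCond} $\beta_{01}\le0$ together with the maximum principle then guarantees positivity is not an obstruction, so $f_1$ can be taken strictly positive while $f_l$ ($l\ge2$) are completely free. This is precisely what lets us run the classical successive-linearisation argument around $(0,0)$.

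The argument then proceeds by induction on $m$. First I would treat the base step $m=1$: matching the $\varepsilon^1$ terms and using that $\beta_{11}=\beta_{02}\equiv0$ and $\beta_{01},\beta_{20}$ are known, the equality of measurement maps forces $u_1^1=u_1^2$ and $v_1^1=v_1^2$ on $\Sigma$ (and their normal derivatives), hence by unique continuation / well-posedness for the linear system $v_1^1\equiv v_1^2$ in $Q$; subtracting the two $v_1$-equations gives $(\beta_{10}^1-\beta_{10}^2)u_1=0$ in $Q$, and choosing $f_1>0$ so that $u_1>0$ in $Q$ (again by the strong maximum principle applied to the heat equation) yields $\beta_{10}^1=\beta_{10}^2$. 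For the inductive step at order $m\ge2$: assuming all lower-order coefficients under consideration vanish or are known, the only new unknown surviving in the $\varepsilon^m$ equation for $u_m$ (resp.\ $v_m$) is $\alpha_{m0}$ (resp.\ $\beta_{m0}$), appearing as $\alpha_{m0}\,u_1^m$ in the source — this uses \eqref{ThmAssump2} or \eqref{ThmAssump3} to kill the contributions of $\alpha_{m_1 n_1}$, $\beta_{m_1 n_1}$ with $m_1\ne m$, and the known higher-order coefficients $\alpha_{m_2n_2}$, $\beta_{m_2n_2}$ with $m_2+n_2=m+1$ (which could otherwise enter through products of lower-order $u_l$'s; one checks by a combinatorial/degree count that no other unknown of total degree $\le m$ can multiply a non-trivial product). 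The condition \eqref{ThmAssump} is needed in part~3 to decouple the $v_m$-equation from residual coupling with the $u$-hierarchy. Then equality of boundary data propagates inductively to $u_l^1=u_l^2$, $v_l^1=v_l^2$ in $Q$ for $l<m$, the $\varepsilon^m$-difference equation collapses to $(\alpha_{m0}^1-\alpha_{m0}^2)(u_1^1)^m=0$ (resp.\ with $\beta_{m0}$), and positivity of $u_1$ from $f_1>0$ finishes the step.

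The step I expect to be the main obstacle is the bookkeeping in the inductive step: verifying that, after imposing the stated vanishing/known hypotheses, the source term of the $\varepsilon^m$-linearised equation genuinely isolates the single unknown $\alpha_{m0}$ (or $\beta_{m0}$) times a power of $u_1$, with every other term either known or of the form (known coefficient)$\times$(already-determined lower-order solutions). This requires carefully organising the Faà di Bruno-type expansion of $F(x,t,u,v)=\sum_{m+n\ge3}\alpha_{mn}u^mv^n$ and $G$ in powers of $\varepsilon$, tracking which monomials $\alpha_{mn}$, $\beta_{mn}$ contribute at order $\varepsilon^m$, and confirming that the assumptions \eqref{ThmAssump2}--\eqref{ThmAssump} remove exactly the dangerous cross-terms while leaving $\alpha_{m0}(u_1)^m$ (resp.\ $\beta_{m0}(u_1)^m$) as the lone new contribution; the asymmetry between $F$ (starting at order $3$) and $G$ (starting at order $1$), plus the asymmetric data (only $u|_\Sigma$ measured, not $v|_\Sigma$), is what makes the three cases behave differently and forces the extra hypotheses. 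The remaining ingredients — convergence of the $\varepsilon$-expansion, well-posedness and unique continuation for the linear parabolic systems, and strict positivity of $u_1$ in $Q$ via the strong maximum principle — are standard given Theorem~\ref{MainThmForwardPb} and the admissibility class $\mathcal{A}$.
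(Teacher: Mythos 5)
There is a genuine gap, and it sits exactly at the point your proposal treats as routine. Your base step claims that equality of the measurement maps forces $v^{(1)}_1=v^{(1)}_2$ on $\Sigma$ together with their normal derivatives, whence $v^{(1)}_1\equiv v^{(1)}_2$ in $Q$ and so $(\beta_{10}^1-\beta_{10}^2)u^{(1)}=0$. But the map $\mathcal{M}^+_{F,G}$ in \eqref{eq:mp1} records only $(u,\partial_\nu u)|_\Sigma$; no Cauchy data of $v$ is ever observed (this is precisely the ``partial data'' feature the paper emphasises). The difference $\tilde v$ satisfies a well-posed parabolic problem with zero initial and Dirichlet data but a nonzero source, and such a problem has a nontrivial solution — vanishing of the prescribed data does not make the source vanish. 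The paper's actual route is different: it does not touch the first-order system for the recovery, but writes $\tilde v$ at the \emph{second} order via the Green's function, extracts the integral identity \eqref{N1eqproof} from $\tilde v(\cdot,0)=0$, and then kills the source by a Fourier-transform/density argument in which $u^{(2)}_2$ ranges over CGO solutions $e^{-4\pi^2|\zeta|^2t-\frac{2\pi i}{\sqrt{\mu}}\zeta\cdot x}$ of \eqref{Linear2j}.

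The same issue propagates to your inductive step, where you place the unknown $\alpha_{m0}$ (resp.\ $\beta_{m0}$) in the $\varepsilon^m$-order equation as $\alpha_{m0}[u^{(1)}]^m$ and then assert the difference equation ``collapses to $(\alpha_{m0}^1-\alpha_{m0}^2)(u^{(1)})^m=0$.'' Two problems: first, as above, you have no mechanism to pass from vanishing Cauchy data of $\tilde u$ (or merely vanishing Dirichlet/initial data of $\tilde v$) to pointwise vanishing of the source; second, even granting an integral identity against adjoint solutions, the only free factor in your source is $u^{(1)}$, which is pinned down by the positivity constraint $f_1>0$ and therefore cannot serve as an oscillatory CGO probe. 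This is the central obstruction the paper's high-order variation scheme is built to circumvent: the coefficient of $u^m$ is recovered from the $(m+1)$-th order linearisation, where it appears as $(\mathrm{const})\,(\alpha_{m0}^1-\alpha_{m0}^2)[u^{(1)}]^{m-1}u^{(2)}_2$ (see \eqref{N3eqV}, \eqref{N4eqU}, \eqref{N4eqV}); the sign-unconstrained input $f_2$ makes $u^{(2)}_2$ a free CGO solution driving the density argument, and the strict positivity of the fixed factor $[u^{(1)}]^{m-1}$ is then used only at the very end to divide it out. Your bookkeeping of which coefficients must vanish or be known, and your reading of the roles of \eqref{ThmAssump2}--\eqref{ThmAssump}, are essentially right, but the proof cannot close at order $m$; it must be run at order $m+1$ with the second-order input as the probe.
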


Observe that the recovery of these coefficients is not simultaneous. On the other hand, as long as the assumptions \eqref{ThmAssump2}, \eqref{ThmAssump3} and \eqref{ThmAssump} are satisfied for some $m\geq2$, it is possible to obtain that the results of (2) and (3) separately by choosing the same $u(x,t)=\sum_{l=1}^{m+1}\varepsilon^l f_l$ on $\Sigma$.

\section{Proof of Main Results}\label{sect:ResultsProof}
To prove the main results, we will develop a high-order linearisation scheme of the system \eqref{MainPDE} with respect to $u(x,t)$ in the case that $F(x,t,u,v),G(x,t,u,v)\in \mathcal{A}$. By Theorem \eqref{MainThmForwardPb},
given $f(x,t)$, the PDE system \eqref{MainPDE} is infinitely differentiable with respect to the input  $f(x,t)$.

We introduce the basic setting of this higher order linearisation method. Consider the system \eqref{MainPDE}. Let 
\[u(x,t;\varepsilon)=\sum_{l=1}^\infty\varepsilon^l f_l\quad \text{ on }\Sigma,\]
where $f_l\in C^{2+\alpha,1+\alpha/2}(\bar{\Sigma})$ with $|\varepsilon|$ small enough satisfying \begin{equation}\label{Cond1}f_1(x,0)=u_0(x)\quad\text{ and }\quad f_l(x,0)=0\text{ for }l\geq2.\end{equation} Assume 
\begin{equation}\label{Cond2}
    f_1(x,t)>0\quad\forall x\in\Omega,t\in(0,T),
\end{equation}
so that for all small positive $\varepsilon$, $u(x,t;\varepsilon)>0$ on $\Sigma$. Then, by Theorem \ref{MainThmForwardPb}, there exists a unique solution $(u(x,t;\varepsilon), v(x,t;\varepsilon))$ of \eqref{MainPDE}. Let $(u(x,t;0), v(x,t;0))$ be the solution of \eqref{MainPDE} when $\varepsilon = 0$.

Let 
\[u^{(1)} := \left. \partial_{\varepsilon} u \right\rvert_{\varepsilon = 0} = \lim_{\varepsilon\to 0} \frac{u(x,t;\varepsilon) - u(x,t;0)}{\varepsilon},\]
\[v^{(1)} := \left. \partial_{\varepsilon} v \right\rvert_{\varepsilon = 0} = \lim_{\varepsilon\to 0} \frac{v(x,t;\varepsilon) - v(x,t;0)}{\varepsilon},\] and consider the new system associated to $(u^{(1)}, v^{(1)})$. 
For $F,G\in C^1$, we have 
\[(u(x,t;0), v(x,t;0))=(0,0),\]
so
\[\partial_t u^{(1)}(x,t) - \mu\Delta u^{(1)}(x,t) = 0\] 
and 
\[\partial_t v^{(1)}(x,t) - \nu\Delta v^{(1)}(x,t) = \beta_{10}(x,t) u^{(1)}(x,t) + \beta_{01}(x,t) v^{(1)}(x,t).\]

Therefore, we have that $\left(u^{(1)},v^{(1)}\right)$ satisfies the following system:
\begin{equation}\label{Linear1}
    \begin{cases}
        \partial_t u^{(1)}(x,t) - \mu\Delta u^{(1)}(x,t) = 0 &\quad \text{in }Q,\\
        \partial_t v^{(1)}(x,t) - \nu\Delta v^{(1)}(x,t) = \beta_{10}(x,t) u^{(1)}(x,t) + \beta_{01}(x,t) v^{(1)}(x,t) &\quad \text{in }Q,\\
        u^{(1)}(x,0) = u_0(x) \geq0,\quad v^{(1)}(x,0) = v_0(x) \geq0  &\quad \text{in }\Omega,\\
        u^{(1)}(x,t) = f_1(x,t) >0,\quad v^{(1)}(x,t) = g(x,t) \geq0 &\quad \text{on }\Sigma.
    \end{cases}
\end{equation}

Then, $u^{(1)}$ satisfies the well-posed heat equation such that $u^{(1)}\in C^{2+\alpha,1+\alpha/2}(\bar{Q})$, which is strictly positive by the maximum principle. Extending $f_1(x,t)$ to $\bar{\Omega}$ which we still denote by $f_1(x,t)$, $\bar{u}:=u^{(1)}-f_1(x,t)$ satisfies the well-posed heat equation
\begin{equation}\label{N1EqU}
    \begin{cases}
        \partial_t \bar{u}(x,t) - \mu\Delta \bar{u}(x,t) = \bar{f}^1(x,t) &\quad \text{in }Q,\\
        \bar{u}(x,0) = \bar{f}^2(x)  &\quad \text{in }\Omega,\\
        \bar{u}_j(x,t) = 0  &\quad \text{on }\Sigma,
    \end{cases}
\end{equation}
where $\bar{f}^1(x,t) \in C^{2+\alpha,1+\alpha/2}(\bar{Q})$, $\bar{f}^2(x) \in C^{2+\alpha}(\bar{\Omega})$ are functions depending on the fixed non-negative known initial value $u_0(x)$ and fixed positive input $f_1(x,t)$.
Then, we can solve \eqref{N1EqU}, and the unique positive solution is given by 
\begin{equation}\label{u1Soln}\bar{u}(x,t) = \int_0^t\int_\Omega\int_\Omega \Phi(x-y-z,t-s) \bar{f}^1(y,s) \bar{f}^2(z) \,dy\,dz\,ds>0,\end{equation}
where $\Phi$ is the fundamental solution of the generalised heat equation on the bounded domain $\Omega$ (given by the mollification with a compactly supported function on $\Omega$).

Next, consider two different values of $\beta_{10}$, given by $\beta_{10}^1$ and $\beta_{10}^2$. Then for $j=1,2$, $v^{(1)}_j$ satisfies 
\begin{equation}\label{N1eqV}\partial_t v^{(1)}_j - \nu\Delta v^{(1)}_j - \beta_{01}(x,t) v^{(1)}_j = \beta_{10}^j u^{(1)}.\end{equation} 
Therefore, $v^{(1)}_j$ is the unique solution given by
\[v^{(1)}_j(x,t) = \int_0^{T-t}\int_\Omega \Psi(x-y,T-t-s) \beta_{10}^j(y,T-s) u^{(1)}(y,T-s) \,dy\,ds,\] where $\Psi$ is the fixed, known Green's function for the operator $\partial_t  - \nu\Delta - \beta_{01}$ on the bounded domain $\Omega$. Note that $v^{(1)}_j$ is not yet determined. But, for $\beta_{10}^j(x,t)\geq0$, $\beta_{01}(x,t)\leq0$, we have that $v^{(1)}_j>0$ by the maximum principle for coercive second order parabolic operators since $u^{(1)}>0$.

\subsection{$m=1$}

We move on to the second order linearisation. Consider
\[u^{(2)}:= \left. \partial_{\varepsilon}^2 u \right\rvert_{\varepsilon = 0}, \quad v^{(2)}:= \left. \partial_{\varepsilon}^2 v \right\rvert_{\varepsilon = 0}.\]
Then, by direct calculations, we have the second order linearisation
\begin{equation}\label{Linear2}
    \begin{cases}
        \partial_t u^{(2)} - \mu\Delta u^{(2)} = 0 &\quad \text{in }Q,\\
        \partial_t v^{(2)} - \nu\Delta v^{(2)} = 2\beta_{20}[u^{(1)}]^2 + 2\beta_{11}u^{(1)}v^{(1)} + 2\beta_{02}[v^{(1)}]^2 + \beta_{10} u^{(2)} + \beta_{01} v^{(2)} &\quad \text{in }Q,\\
        u^{(2)}(x,0) = v^{(2)}(x,0) = 0  &\quad \text{in }\Omega,\\
        u^{(2)}(x,t) = f_2(x,t),\quad v^{(2)}(x,t) = 0  &\quad \text{on }\Sigma.
    \end{cases}
\end{equation}

We have come to our first possible scenario for Theorem \ref{MainThm}:

\begin{theorem}\label{ThmN2}
    Assume that $F,G\in\mathcal{A}$ are such that $\beta_{11}=\beta_{02}\equiv0$, and $\beta_{01},\beta_{20}$ are known and fixed.
    Let $\mathcal{M}^+_{G^j}$ be the measurement map associated to \eqref{MainPDE} for 
    \[u(x,t)=\sum_{l=1}^2\varepsilon^l f_l\quad \text{ on }\Sigma,\] 
    such that Conditions \eqref{Cond1}--\eqref{Cond2} are satisfied, as well as the assumptions of Theorem \ref{MainThmForwardPb}. 
    If 
    \[\mathcal{M}^+_{G^1}(u|_{\partial\Omega})=\mathcal{M}^+_{G^2}(u|_{\partial\Omega}),\]
    then it holds that
    \[\beta_{10}^1(x,t)=\beta_{10}^2(x,t)\text{ in }Q.\]
\end{theorem}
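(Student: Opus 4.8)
The plan is to exploit the second-order linearisation system \eqref{Linear2} together with the information already extracted from the first-order system \eqref{Linear1}. Since $\beta_{11}=\beta_{02}\equiv 0$ and $\beta_{20},\beta_{01}$ are known and identical for the two configurations, subtracting the $v^{(2)}$-equations for $j=1,2$ and writing $w:=v^{(2)}_1-v^{(2)}_2$ should produce a parabolic equation of the form
\[
\partial_t w - \nu\Delta w - \beta_{01} w = 2\beta_{20}\bigl([u^{(1)}_1]^2-[u^{(1)}_2]^2\bigr) + \beta_{10}^1 u^{(2)}_1 - \beta_{10}^2 u^{(2)}_2 + \bigl(\beta_{10}^1 v^{(2)}_1 - \beta_{10}^2 v^{(2)}_2\bigr)\ \text{-type terms},
\]
but the crucial simplification is that $u^{(1)}$, $u^{(2)}$ solve the \emph{same} heat equation for both configurations (the $u$-equations in \eqref{Linear1}, \eqref{Linear2} do not see $F$ or $G$), so $u^{(1)}_1=u^{(1)}_2=:u^{(1)}$ and $u^{(2)}_1=u^{(2)}_2=:u^{(2)}$. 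This kills the $\beta_{20}$ term entirely and collapses the source to $(\beta_{10}^1-\beta_{10}^2)u^{(2)}$ plus lower-order coupling through the (still undetermined) $v^{(1)}$'s and $v^{(2)}$'s.

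Next I would use the measurement identity. Equality of $\mathcal{M}^+$ means $u$ and $\partial_\nu u$ agree on $\Sigma$ to all orders in $\varepsilon$; in particular the Cauchy data of $v^{(2)}_1$ and $v^{(2)}_2$ — wait, no: the measurement is on $u$ only. Here is where one must think carefully. The boundary data of $u$ at orders $1,2$ in $\varepsilon$ are the \emph{prescribed} inputs $f_1,f_2$, which are the same for both runs by construction, so the naive measurement gives nothing new about the $u$-part. The content of $\mathcal{M}^+_{G^1}=\mathcal{M}^+_{G^2}$ must instead be accessed at \emph{higher} order in $\varepsilon$, where $F\equiv 0$ up to order $2$ (recall $F$ starts at cubic order, $m+n\geq 3$) but the coupling feeds $v$ back into $u$ — except that the $u$-equation in \eqref{MainPDE} has source $F(x,t,u,v)$ with $F$ cubic, so the lowest order at which $v$ influences $u$ is order $3$. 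So at order $3$ the equation for $u^{(3)}$ acquires a source built from $\alpha_{mn}$ (zero, since $F\equiv 0$ in this first scenario? — actually $F\in\mathcal A$ is only assumed, not zero) and from products like $[u^{(1)}]^3$, $[u^{(1)}]^2 v^{(1)}$, etc. The key point: at order $3$, $\partial_\nu u^{(3)}|_\Sigma$ becomes observable and differs between the two runs \emph{only through} the difference of the $v^{(1)}$'s, hence through $\beta_{10}^1-\beta_{10}^2$.

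So the refined plan is: (i) write the order-$3$ linearised $u$-equation, identify that its sole $G$-dependence enters via $v^{(1)}_j$ (through the cubic terms of $F$ coupling $u$ and $v$), and that $v^{(1)}_j$ depends linearly and explicitly on $\beta_{10}^j$ via the Green's-function representation with kernel $\Psi$ already displayed; (ii) from $\partial_\nu u^{(3)}_1=\partial_\nu u^{(3)}_2$ on $\Sigma$ deduce, via an integration-by-parts / unique-continuation argument against a dense family of solutions of the adjoint heat equation, an integral identity forcing $\int_Q \bigl(\beta_{10}^1-\beta_{10}^2\bigr)\,u^{(1)}\,(\text{test})\,dx\,dt = 0$; (iii) invoke the density/completeness of products $u^{(1)}\cdot(\text{adjoint solutions})$ — this is exactly the "function spaces generated by products of solutions" theme flagged in the introduction — to conclude $\beta_{10}^1=\beta_{10}^2$ in $Q$, using that $u^{(1)}>0$ by the maximum principle so it can be divided out.

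The main obstacle I anticipate is step (iii): establishing that the relevant products span a dense enough set to annihilate an arbitrary $L^2$ (or continuous) difference $\beta_{10}^1-\beta_{10}^2$, given that we only get to vary $f_1$ (subject to positivity!) and $f_2$, and given that the measurement sits on $u$ rather than $v$. One likely needs to use the freedom in $f_1$ (positive, but otherwise arbitrary in $C^{2+\alpha,1+\alpha/2}$) to generate a rich family of positive $u^{(1)}$'s, combine with the heat-semigroup density of such $u^{(1)}$ and of adjoint solutions (à la Runge approximation), and carefully track that the positivity constraint on $f_1$ does not shrink the span — a subtlety absent in the linearise-around-$(0,1)$ approach of \cite{LiuZhang2022-InversePbMFG}. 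A secondary technical point is justifying term-by-term extraction of the $\varepsilon$-expansion of $\partial_\nu u$ on $\Sigma$, which follows from the analyticity in $\varepsilon$ granted by Theorem \ref{MainThmForwardPb} and the admissibility of $F,G$.
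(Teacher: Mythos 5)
Your second-order setup is exactly the paper's: $u^{(1)}$ and $u^{(2)}$ solve the free heat equation, hence coincide for the two configurations, and $w=v^{(2)}_1-v^{(2)}_2$ satisfies $\partial_t w-\nu\Delta w-\beta_{01}w=(\beta_{10}^1-\beta_{10}^2)u^{(2)}$. Where you diverge --- and where the genuine gap lies --- is the decision to route the information through the third-order $u$-equation. In \eqref{Linear3} the only way $v^{(1)}$ (and hence $\beta_{10}$) can reach $u^{(3)}$ is through the coefficients $\alpha_{03},\alpha_{12},\alpha_{21}$ of $F$, and Theorem \ref{ThmN2} does not assume any of these are nonzero; the form \eqref{Fform} even allows $F\equiv0$, in which case $u^{(k)}$ is independent of $G$ at every order and the identity you hope to extract in step (ii) is vacuous. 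Even when the coupling is present, you would have to invert the two-fold composition $\beta_{10}\mapsto v^{(1)}\mapsto\partial_\nu u^{(3)}\rvert_\Sigma$, and the density statement you flag in step (iii) --- that the relevant products span enough functions despite the positivity constraint on $f_1$ --- is precisely the part you do not prove. So the proposal does not close.

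The paper never goes to third order. It stays with the $v^{(2)}$-difference equation \eqref{N2eqV}: since $\tilde v$ has zero initial and lateral data, its Green's-function representation with kernel $\Psi$, evaluated at $t=0$, yields the integral identity \eqref{N1eqproof} for every admissible $f_2$. Because $f_2$ carries no sign constraint, $u^{(2)}_2$ may be taken to be the exponential (CGO) solution $e^{-4\pi^2|\zeta|^2t-\frac{2\pi i}{\sqrt{\mu}}\zeta\cdot x}$ of the free heat equation, and the Fourier transform in $x$ together with Weierstrass density in $t$ forces $\beta_{10}^1=\beta_{10}^2$. Your observation that the order-$1$ and order-$2$ boundary data of $u$ are prescribed inputs and hence ``measure nothing'' about $G$ is a fair structural point, but the paper's mechanism for extracting \eqref{N1eqproof} is the vanishing initial and boundary data of $v^{(2)}$ combined with the explicit representation formula, not the measured Neumann data of $u^{(3)}$; if you want to repair your argument you should adopt that second-order route rather than the third-order one.
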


\begin{proof}
From \eqref{Linear2}, $u^{(2)}_j$, $v^{(2)}_j$ satisfy
\begin{equation}\label{Linear2j}
    \begin{cases}
        \partial_t u^{(2)}_j - \mu\Delta u^{(2)}_j = 0 &\quad \text{in }Q,\\
        \partial_t v^{(2)}_j - \nu\Delta v^{(2)}_j = 2\beta_{20}[u^{(1)}]^2 + \beta_{10}^j u^{(2)}_j + \beta_{01} v^{(2)}_j &\quad \text{in }Q,\\
        u^{(2)}_j(x,0) = v^{(2)}_j(x,0) = 0  &\quad \text{in }\Omega,\\
        u^{(2)}_j(x,t) = f_2^j(x,t),\quad v^{(2)}_j(x,t) = 0   &\quad \text{on }\Sigma,
    \end{cases}
\end{equation}
where $u^{(1)}>0$ is the unique solution of \eqref{N1EqU}.

Then once again, $u^{(2)}_j$ satisfies the well-posed heat equation such that $u^{(2)}_j\in C^{2+\alpha,1+\alpha/2}(\bar{Q})$ for $j=1,2$. In this case, since $f_2^j(x,t)$ can be positive or negative, $u^{(2)}_j$ is not strictly positive. Extending the given input $f_2^j(x,t)$ to $\bar{\Omega}$ which we still denote by $f_2^j(x,t)$, the unique solution $u^{(2)}_j$ is given by 
\[u^{(2)}_j(x,t) = \int_0^t\int_\Omega \Phi(x-y,t-s) f^j_2(y,s) \,dy\,ds.\]
When $\mathcal{M}^+_{G^1}=\mathcal{M}^+_{G^2}$, the input data satisfy $f_2^1=f_2^2$, so $u^{(2)}_1=u^{(2)}_2$. 

Next, taking the difference of the two equations for $j=1,2$, we have, denoting $\tilde{v}=v^{(2)}_1-v^{(2)}_2$, 
\begin{align}\partial_t \tilde{v} - \nu\Delta \tilde{v} - \beta_{01} \tilde{v} =  \beta_{10}^1 u^{(2)}_1 - \beta_{10}^2 u^{(2)}_2 &= \beta_{10}^1 (u^{(2)}_1 - u^{(2)}_2) + (\beta_{10}^1 - \beta_{10}^2) u^{(2)}_2  = (\beta_{10}^1 - \beta_{10}^2) u^{(2)}_2. \label{N2eqV}\end{align} Therefore, $\tilde{v}$ is the unique solution given by
\begin{multline*}\tilde{v}(x,t) = \int_0^{T-t}\int_\Omega \Psi(x-y,T-t-s) (\beta_{10}^1(y,T-s) - \beta_{10}^2(y,T-s))  u^{(2)}_2(y,T-s) \,dy\,ds.\end{multline*}

Since $v^{(1)}_j(x,0) = 0$ for $j=1,2$, 
\[\int_0^{T}\int_\Omega \Psi(x-y,T-s) (\beta_{10}^1(y,T-s) - \beta_{10}^2(y,T-s)) u^{(2)}_2(y,T-s) \,dy\,ds = 0,\]
which we write as
\begin{equation}\label{N1eqproof}\int_0^{T} \left[ \Psi * ((\beta_{10}^1 - \beta_{10}^2) u^{(2)}_2) \right] (x,T-s) \,ds = 0.\end{equation} This holds for all $f_2 \in C^{2+\alpha,1+\alpha/2}(\bar{\Sigma})$, hence for all solutions $u^{(2)}_2$ of \eqref{N1EqU}.

Recall that the Fourier transform of a function is given by
\[\mathcal{F}(\varphi)(\psi) := \int_{\mathbb{R}^n} \varphi(x) e^{-2\pi i \xi\cdot x} \,dx,\] where $i:=\sqrt{-1}$, $\xi\in\mathbb{R}^n$. Applying this Fourier transform with respect to the space variable to both sides of \eqref{N1eqproof}, we obtain
\[\int_0^{T} \left[\mathcal{F}(\Psi) \mathcal{F}((\beta_{10}^1 - \beta_{10}^2) u^{(2)}_2) \right] (\xi,T-s) \,ds = 0 \quad \forall \xi\in\mathbb{R}^n\]

Since $G^j$ is continuous with respect to $x$ and $t$, so is $\beta_{10}^j$, so there exists $\hat{\beta}_\eta(t)$ such that
\[\beta_{10}^1(x,t) - \beta_{10}^2(x,t) = \int_{-\infty}^\infty \hat{\beta}_\eta(t) e^{2\pi i \eta\cdot x}.\] 
Choosing $u^{(2)}_2(x,t)$ to be the complex geometric optics (CGO) solution \[u^{(2)}_2(x,t) = e^{-4 \pi^2 |\zeta|^2 t - \frac{2\pi i}{\sqrt{\mu}} \zeta \cdot x }\quad\text{ for any fixed } \zeta\in\mathbb{R}^n,\] we observe that such a $u^{(2)}_2$ satisfies \eqref{Linear2j}. Invoking the Weierstrass' approximation theorem to obtain the density of $\exp \left(-4 \pi^2 |\zeta|^2 t\right)$ in $C^1(0,T)$, we can conclude that $\hat{\beta}_\eta(t)=0$ for all $\eta\in \mathbb{R}^n$. Therefore,
\[\beta_{10}^1(x,t)=\beta_{10}^2(x,t)\quad\forall (x,t)\in Q.\]

\end{proof}

\begin{remark}\label{remark:N2coef}
    The assumption $\beta_{11}=\beta_{02}\equiv0$ is taken since we have not yet determined $v^{(1)}_j$. In particular, $v^{(1)}_j$ depends on $\beta_{10}^j$, which is what we want to recover. We remark that it may be possible to remove this assumption and write out the corresponding equation similar to \eqref{N2eqV} to write out the corresponding identity \eqref{N1eqproof}, so as to argue similarly to obtain the result. However, we have chosen to take this assumption to keep our results clear and emphasise the physical significance of our higher-order variation method in maintaining positivity of solutions.
\end{remark}

Having determined $\beta_{10}^j$, we can now return to the first order linearisation in \eqref{N1eqV} to determine $v^{(1)}_j$. Furthermore, for $\beta_{11},\beta_{02}$ known, fixed and not necessarily equivalent to $0$, we can determine $v^{(2)}_j$, where $v^{(2)}_j$ satisfies
\begin{equation}\label{N2EqVSolve}\partial_t v^{(2)} - \nu\Delta v^{(2)} - \beta_{01} v^{(2)} = 2\beta_{20}[u^{(1)}]^2 + 2\beta_{11}u^{(1)}v^{(1)} + 2\beta_{02}[v^{(1)}]^2 + \beta_{10} u^{(2)} \quad \text{in }Q.\end{equation} In particular, setting 
\[\mathbb{V}^{(2)}(x,t):=2\beta_{20}[u^{(1)}]^2 + 2\beta_{11}u^{(1)}v^{(1)} + 2\beta_{02}[v^{(1)}]^2, \]
we have 
\begin{equation}\label{v2soln}v^{(2)}_j(x,t) = \int_0^{T-t}\int_\Omega \Psi(x-y,T-t-s) [\mathbb{V}^{(2)} + \beta_{10}^j u^{(2)}_j](y,T-s) \,dy\,ds.\end{equation}

\subsection{$m=2$}

We next show the result for $m=2$, by considering the third order linearisation.

Defining $u^{(3)}, v^{(3)}$ inductively, we have the third order linearisation:
\begin{equation}\label{Linear3}
    \begin{cases}
        \partial_t u^{(3)} - \mu\Delta u^{(3)} = 6\alpha_{30}[u^{(1)}]^3 + 6\alpha_{03}[v^{(1)}]^3 + 6\alpha_{12}u^{(1)}[v^{(1)}]^2 + 6\alpha_{21}[u^{(1)}]^2v^{(1)} &\quad \text{in }Q,\\
        \partial_t v^{(3)} - \nu\Delta v^{(3)} = 6\beta_{30}[u^{(1)}]^3 + 6\beta_{03}[v^{(1)}]^3 + 6\beta_{12}u^{(1)}[v^{(1)}]^2 + 6\beta_{21}[u^{(1)}]^2v^{(1)} \\ 
        \quad\quad\quad\quad\quad\quad\quad\quad + 6\beta_{20}u^{(1)}u^{(2)} + 6\beta_{02}v^{(1)}v^{(2)} + 3\beta_{11}u^{(2)}v^{(1)} \\ 
        \quad\quad\quad\quad\quad\quad\quad\quad + 3\beta_{11}u^{(1)}v^{(2)}  + \beta_{10} u^{(3)} + \beta_{01} v^{(3)} &\quad \text{in }Q,\\
        u^{(3)}(x,0) = v^{(3)}(x,0) = 0  &\quad \text{in }\Omega,\\
        u^{(3)}(x,t) = f_3(x,t),\quad v^{(3)}(x,t) = 0  &\quad \text{on }\Sigma.
    \end{cases}
\end{equation}

Our next scenario for Theorem \ref{MainThm} is as follows:

\begin{theorem}\label{ThmN3}
    Assume that $F,G\in\mathcal{A}$ are such that $\beta_{01}(x,t)\leq0$, \[\beta_{11}=\beta_{02}\equiv0,\] and all the remaining coefficients of \eqref{Linear3} are known and fixed except for $\beta_{20}$.
    Let $\mathcal{M}^+_{F^j,G^j}$ be the measurement map associated to \eqref{MainPDE} for 
    \[u(x,t)=\sum_{l=1}^3\varepsilon^l f_l\quad \text{ on }\Sigma,\] 
    such that Conditions \eqref{Cond1}--\eqref{Cond2} are satisfied, as well as the assumptions of Theorem \ref{MainThmForwardPb}. 
    If 
    \[\mathcal{M}^+_{F^1,G^1}(u|_{\partial\Omega})=\mathcal{M}^+_{F^2,G^2}(u|_{\partial\Omega}),\]
    then it holds that
    \[\beta_{20}^1(x,t)=\beta_{20}^2(x,t)\quad\text{ in }Q.\]

    %Suppose further that $\beta_{10}$ is known, fixed and non-negative. Then \[\beta_{11}^1(x,t)=\beta_{11}^2(x,t)\quad\text{ in }Q.\]
    %\beta_{11} can only be determined when $v^{(2)} $known and is positive, but $v^{(2)}$ only known when $\beta_{11}$ and $\beta_{02}$ known.
\end{theorem}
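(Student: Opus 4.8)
The plan is to carry out the third-order linearisation of \eqref{MainPDE} along the probing family $u|_{\Sigma}=\varepsilon f_1+\varepsilon^2 f_2+\varepsilon^3 f_3$ with $f_1>0$, and then to mimic, at the level of the system \eqref{Linear3}, the argument used for Theorem \ref{ThmN2}. Write $(u^{(l)}_j,v^{(l)}_j)$ for the $l$-th order terms attached to $(F^j,G^j)$. Since $F$ carries no sub-cubic part, $u^{(1)}$ and $u^{(2)}$ solve the heat equation with the prescribed boundary data $f_1,f_2$, hence coincide for $j=1,2$; since $\beta_{10}$ (by Theorem \ref{ThmN2}) and $\beta_{01}$ are known, \eqref{N1eqV} determines $v^{(1)}$ uniquely, so $v^{(1)}_1=v^{(1)}_2=:v^{(1)}$; and then the $u$-component of \eqref{Linear3}, whose source is built only from known $\alpha$-coefficients together with $u^{(1)},v^{(1)}$, forces $u^{(3)}_1=u^{(3)}_2=:u^{(3)}$. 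The decisive structural point is that the hypothesis $\beta_{11}=\beta_{02}\equiv0$ removes the three terms $6\beta_{02}v^{(1)}v^{(2)}$, $3\beta_{11}u^{(2)}v^{(1)}$ and $3\beta_{11}u^{(1)}v^{(2)}$ from the $v$-equation of \eqref{Linear3}; the first and third are the only occurrences of $v^{(2)}$ there, and $v^{(2)}$ is precisely the one lower-order quantity that still depends on the unknown (through its source $2\beta_{20}[u^{(1)}]^2$ in \eqref{N2EqVSolve}), so that in general $v^{(2)}_1\neq v^{(2)}_2$. After discarding the known, common terms, the only unknown contribution to the $v^{(3)}$-equation is therefore the bilinear term $6\beta_{20}^j\,u^{(1)}u^{(2)}$.

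Next I would set $\tilde v:=v^{(3)}_1-v^{(3)}_2$. Subtracting the two third-order $v$-equations gives
\[
\partial_t\tilde v-\nu\Delta\tilde v-\beta_{01}\tilde v=6\,(\beta_{20}^1-\beta_{20}^2)\,u^{(1)}u^{(2)}\quad\text{in }Q,\qquad \tilde v=0\ \text{on }\Sigma,\qquad \tilde v(\cdot,0)=0.
\]
Because $\beta_{01}\leq0$, the operator $\partial_t-\nu\Delta-\beta_{01}$ is coercive and possesses the Green's function $\Psi$ already used above, obtained from a mollified fundamental solution on the bounded Lipschitz domain $\Omega$ as in \eqref{u1Soln}. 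Representing $\tilde v$ through $\Psi$ and using $\tilde v(\cdot,0)=0$ together with the equality of the boundary observations, one obtains — by exactly the reasoning that produced \eqref{N1eqproof} in the proof of Theorem \ref{ThmN2} —
\[
\int_0^T\Big[\Psi*\big(6(\beta_{20}^1-\beta_{20}^2)\,u^{(1)}u^{(2)}\big)\Big](x,T-s)\,ds=0,
\]
valid for every strictly positive heat solution $u^{(1)}$ arising from an admissible $f_1>0$ and every heat solution $u^{(2)}$ arising from some $f_2\in C^{2+\alpha,1+\alpha/2}(\bar{\Sigma})$.

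Then I would fix $u^{(1)}$ and put $w:=6(\beta_{20}^1-\beta_{20}^2)\,u^{(1)}$, which is continuous in $(x,t)$; the last identity then coincides with the one treated in Theorem \ref{ThmN2}, with $\beta_{10}^1-\beta_{10}^2$ replaced by $w$. Applying the spatial Fourier transform, inserting the CGO family $u^{(2)}(x,t)=\exp\!\big(-4\pi^2|\zeta|^2 t-\frac{2\pi i}{\sqrt{\mu}}\,\zeta\cdot x\big)$, $\zeta\in\mathbb{R}^n$ (which solves $\partial_t u^{(2)}-\mu\Delta u^{(2)}=0$), writing $w(x,t)=\int_{\mathbb{R}^n}\hat w_\eta(t)\,e^{2\pi i\eta\cdot x}\,d\eta$, and invoking the Weierstrass density of $\{\,e^{-4\pi^2|\zeta|^2 t}:\zeta\in\mathbb{R}^n\,\}$ in $C^1(0,T)$, one concludes $\hat w_\eta\equiv0$ for all $\eta$, i.e. $w\equiv0$ in $Q$. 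Since $f_1>0$, the maximum principle yields $u^{(1)}>0$ throughout $Q$, so dividing by $u^{(1)}$ gives $\beta_{20}^1=\beta_{20}^2$ in $Q$, as claimed.

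The genuinely new difficulty, relative to the first-order recovery of Theorem \ref{ThmN2}, is that the unknown now multiplies the product $u^{(1)}u^{(2)}$, in which $u^{(1)}$ cannot be taken to be an oscillatory CGO mode: it is forced to be strictly positive, which is exactly the constraint that keeps the underlying solution $u$ non-negative in the high-order variation scheme. This is handled by the bookkeeping of the first paragraph — one must make sure that at third order the unknown $\beta_{20}$ enters only through the clean bilinear term $6\beta_{20}u^{(1)}u^{(2)}$, with $v^{(2)}$ (which would reintroduce $\beta_{20}$ nonlinearly) annihilated by $\beta_{11}=\beta_{02}\equiv0$; once this is secured, $u^{(1)}$ is just a strictly positive multiplier that can be absorbed into $w$ and divided out at the end, while the full CGO/Weierstrass freedom remains with $u^{(2)}$. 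A secondary, routine point — common to all the cases — is justifying the Green's-function representation and the passage to the integral identity on a general Lipschitz domain, and verifying the compatibility conditions of Theorem \ref{MainThmForwardPb} for the chosen $f_1,f_2,f_3$.
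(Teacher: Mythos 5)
Your proposal is correct and follows essentially the same route as the paper's proof: form the difference equation for $\tilde v=v^{(3)}_1-v^{(3)}_2$, which under $\beta_{11}=\beta_{02}\equiv0$ isolates the source $6(\beta_{20}^1-\beta_{20}^2)u^{(1)}u^{(2)}$, then run the CGO/Fourier/Weierstrass argument of Theorem~\ref{ThmN2} on $u^{(2)}$ and divide out the strictly positive $u^{(1)}$. Your preliminary bookkeeping (why $u^{(3)}_1=u^{(3)}_2$ and why $v^{(2)}$ must be annihilated since it carries the unknown $\beta_{20}$) is exactly the content of the paper's Remark~\ref{remark:N3coef}, just made explicit up front.
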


%\beta_{10} already determined, so no use to oscillate u3

\begin{proof}
For input $f$ and given functions $u_0,v_0,g$ satisfying the assumptions of Theorem \ref{MainThmForwardPb}, since all the coefficients of \eqref{Linear3} are known and fixed except for $\beta_{20}$, we can compute $u^{(1)}$, $u^{(2)}_j$ and $v^{(1)}$ (given $\beta_{10}$ known, fixed) using \eqref{Linear1} and \eqref{Linear2j}. Consequently, since all the coefficients $\alpha_{mn}$, $m+n=3$, are fixed and known, we first compute $u^{(3)}$ from \eqref{Linear3} and obtain a solution $u^{(3)}\in C^{2+\alpha,1+\alpha/2}(\bar{Q})$. Once again, $u^{(3)}$ may be positive or negative.

Then, again taking the difference of the two equations for the corresponding $j$-th problem of \eqref{Linear3} for $j=1,2$, we have, denoting $\tilde{v}=v^{(3)}_1-v^{(3)}_2$, 
\begin{equation}\label{N3eqV}\partial_t \tilde{v} - \nu\Delta \tilde{v} - \beta_{01}\tilde{v} =  6(\beta_{20}^1 - \beta_{20}^2) u^{(1)} u^{(2)}_2\end{equation}
when $\mathcal{M}^+_{F^1,G^1}=\mathcal{M}^+_{F^1,G^2}$.

Applying the same argument as in the case for $m=1$ to \eqref{N3eqV} with the same CGO solution for $u^{(2)}_2$, we have that 
\[(\beta_{20}^1 - \beta_{20}^2) u^{(1)} = 0 \quad\forall (x,t)\in Q.\] Since $u^{(1)}$ satisfies the parabolic equation \eqref{Linear1} with positive initial and boundary conditions, by the maximum principle, $u^{(1)}>0$ for all $x,t$, and we conclude that 
\[\beta_{20}^1(x,t)=\beta_{20}^2(x,t)\quad\text{ in }Q.\]

\end{proof}

\begin{remark}\label{remark:N3coef}
    The assumption $\beta_{01}(x,t)\leq0$ is to ensure coercivity of the operator $\partial_t-\Delta-\beta_{10}$. 

    The assumption that the coefficients $\alpha_{mn}$, $m+n=3$, are fixed and known allows us to compute $u^{(3)}=u^{(3)}_1=u^{(3)}_2$. This is natural since we input a fixed boundary condition $f_3$, and the other involved terms $u^{(1)}$, $v^{(1)}$ are fixed. One may ask whether it is possible to determine the coefficients $\alpha_{mn}$. We think that it may be possible to separately determine each of the coefficients $\alpha_{mn}$ assuming the others are fixed, by multiplying \eqref{Linear3} by a test function $\psi$ and taking suitable $\psi$ to obtain the result. One may refer to the proofs given in \cite{LiuZhang2022-InversePbMFG} and \cite{LiuZhangMFG3} for more details, where one chooses $\psi$ such that the chosen functions span the solution space of the adjoint equation. However, this method does not ensure the positivity of solutions, which is the core concept of this study, so we choose to leave it out here.

    A similar argument applies for the coefficients $\beta_{mn}$, $m+n=3$, as well as $\beta_{10}$.

    Finally, we discuss the assumption 
    \[\beta_{11}=\beta_{02}\equiv0.\] We recall that $v^{(2)}_j$ varies together with $u^{(2)}_j$ and $\beta_{20}^j$, via the equation \eqref{Linear2}. Following the discussion given in Remark \ref{remark:N2coef}, it may be possible to consider \eqref{N3eqV} with nonzero (and possibly non-fixed) coefficients $\beta_{11}$, $\beta_{02}$ to obtain a more complicated identity similar to \eqref{N1eqproof} for the case of the third order linearisation, and obtain the same result, or even the equality of $\beta_{11}^j$ or $\beta_{02}^j$ for $j=1,2$. Therefore, our choice of this assumption is only one possible choice that is sufficient to obtain the result, but is not a necessary condition.
\end{remark}

\subsection{$m\geq3$}

We will first show the results for $m=3$. Defining $u^{(4)}, v^{(4)}$ inductively, we have the fourth order linearisation:
\begin{equation}\label{Linear4}
    \begin{cases}
        \partial_t u^{(4)} - \mu\Delta u^{(4)} = 24\alpha_{40}[u^{(1)}]^4 + 24\alpha_{04}[v^{(1)}]^4 + 6\alpha_{31}[u^{(1)}]^3v^{(1)} + 6\alpha_{13}u^{(1)}[v^{(1)}]^3 \\ 
        \quad\quad\quad\quad\quad\quad\quad\quad  + 2\alpha_{22}[u^{(1)}]^2[v^{(1)}]^2+ 18\alpha_{30}[u^{(1)}]^2u^{(2)} + 18\alpha_{03}[v^{(1)}]^2v^{(2)}\\ 
        \quad\quad\quad\quad\quad\quad\quad\quad  + 6\alpha_{12}u^{(2)}[v^{(1)}]^2  + 12\alpha_{12}u^{(1)}v^{(1)}v^{(2)} + 12\alpha_{21}u^{(1)}u^{(2)}v^{(1)} \\ 
        \quad\quad\quad\quad\quad\quad\quad\quad  + 6\alpha_{21}[u^{(1)}]^2v^{(2)} &\quad \text{in }Q,\\
        \partial_t v^{(4)} - \nu\Delta v^{(4)} = 24\beta_{40}[u^{(1)}]^4 + 24\beta_{04}[v^{(1)}]^4 + 6\beta_{31}[u^{(1)}]^3v^{(1)} + 6\beta_{13}u^{(1)}[v^{(1)}]^3 \\ 
        \quad\quad\quad\quad\quad\quad\quad\quad  + 2\beta_{22}[u^{(1)}]^2[v^{(1)}]^2 + 18\beta_{30}[u^{(1)}]^2u^{(2)} + 18\beta_{03}[v^{(1)}]^2v^{(2)}\\ 
        \quad\quad\quad\quad\quad\quad\quad\quad  + 6\beta_{12}u^{(2)}[v^{(1)}]^2  + 12\beta_{12}u^{(1)}v^{(1)}v^{(2)} + 12\beta_{21}u^{(1)}u^{(2)}v^{(1)} \\ 
        \quad\quad\quad\quad\quad\quad\quad\quad  + 6\beta_{21}[u^{(1)}]^2v^{(2)} + 6\beta_{20}u^{(1)}u^{(3)} + 6\beta_{20}[u^{(2)}]^2\\
        \quad\quad\quad\quad\quad\quad\quad\quad + 6\beta_{02}v^{(1)}v^{(3)} + 6\beta_{02}[v^{(2)}]^2 +
        6\beta_{11}u^{(2)}v^{(2)} +
        3\beta_{11}u^{(3)}v^{(1)} \\ 
        \quad\quad\quad\quad\quad\quad\quad\quad  + 3\beta_{11}u^{(1)}v^{(3)} + \beta_{10} u^{(4)} + \beta_{01} v^{(4)} + \beta_{00} &\quad \text{in }Q,\\
        u^{(4)}(x,0) = v^{(4)}(x,0) = 0  &\quad \text{in }\Omega,\\
        u^{(4)}(x,t) = f_4(x,t),\quad v^{(4)}(x,t) = 0  &\quad \text{on }\Sigma.
    \end{cases}
\end{equation}

Then, our first result for $m=3$ corresponding to Theorem \ref{MainThm}(2) reads:

\begin{theorem}\label{ThmN4U}
    Assume that $F,G\in\mathcal{A}$ are such that \[\alpha_{03}=\alpha_{12}=\alpha_{21}\equiv0,\] and $\beta_{10}$ and $\alpha_{m_2n_2}$ are known and fixed for $m_2+n_2=4$. Let $\mathcal{M}^+_{F^j,G^j}$ be the measurement map associated to \eqref{MainPDE} for 
    \[u(x,t)=\sum_{l=1}^4\varepsilon^l f_l\quad \text{ on }\Sigma,\] 
    such that Conditions \eqref{Cond1}--\eqref{Cond2} are satisfied, as well as the assumptions of Theorem \ref{MainThmForwardPb}. 
    If 
    \[\mathcal{M}^+_{F^1,G^1}(u|_{\partial\Omega})=\mathcal{M}^+_{F^2,G^2}(u|_{\partial\Omega}),\]
    then it holds that
    \[\alpha_{30}^1(x,t)=\alpha_{30}^2(x,t)\quad\text{ in }Q.\]
\end{theorem}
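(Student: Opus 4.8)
The plan is to push the high-order linearisation one more step, to the fourth-order system \eqref{Linear4}, and to extract $\alpha_{30}$ from the first (the $u^{(4)}$-) equation, in the same spirit as the $m=1$ and $m=2$ cases (Theorems~\ref{ThmN2} and \ref{ThmN3}) but now working with the $u$-equation rather than the $v$-equation. The hypotheses are shaped exactly so that this is clean: after imposing $\alpha_{03}=\alpha_{12}=\alpha_{21}\equiv0$, every term of the $u^{(4)}$-equation that involves $u^{(2)}$ or $v^{(2)}$ drops out except for $18\alpha_{30}[u^{(1)}]^2u^{(2)}$ (so no unknown coefficients re-enter through $v^{(2)}$, cf. \eqref{Linear2}), and the leftover forcing $24\alpha_{40}[u^{(1)}]^4+24\alpha_{04}[v^{(1)}]^4+6\alpha_{31}[u^{(1)}]^3v^{(1)}+6\alpha_{13}u^{(1)}[v^{(1)}]^3+2\alpha_{22}[u^{(1)}]^2[v^{(1)}]^2$ involves only $u^{(1)}$, $v^{(1)}$ and coefficients that are known and fixed.

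First I would note that $u^{(1)}$ and $u^{(2)}$ are common to the two configurations (each solves a homogeneous heat equation with prescribed data $f_1$, $f_2$), that $u^{(1)}>0$ in $Q$ by the maximum principle, and that $v^{(1)}$ is then determined from the known data via the second equation of \eqref{Linear1}, so $v^{(1)}_1=v^{(1)}_2$. Subtracting the two $u^{(4)}$-equations and setting $\tilde u:=u^{(4)}_1-u^{(4)}_2$ collapses the right-hand side to
\begin{equation*}
\partial_t\tilde u-\mu\Delta\tilde u=18\,(\alpha_{30}^1-\alpha_{30}^2)\,[u^{(1)}]^2\,u^{(2)}\quad\text{in }Q,
\end{equation*}
and the equality of the measurement maps together with \eqref{Linear4} gives $\tilde u(\cdot,0)=0$ in $\Omega$ and $\tilde u=\partial_\nu\tilde u=0$ on $\Sigma$. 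From here I would repeat the argument of the proof of Theorem~\ref{ThmN2}: use the Green's function of the heat operator on $\Omega$ together with the vanishing initial and boundary data to obtain an integral identity of the form
\begin{equation*}
\int_0^T\bigl[\Phi*\bigl((\alpha_{30}^1-\alpha_{30}^2)[u^{(1)}]^2u^{(2)}\bigr)\bigr](x,T-s)\,ds=0,
\end{equation*}
valid for all admissible $f_2$ and hence, by linearity in $u^{(2)}$, for all heat solutions $u^{(2)}$; then apply the spatial Fourier transform, expand $(\alpha_{30}^1-\alpha_{30}^2)[u^{(1)}]^2$ in Fourier modes in $x$, insert the CGO profile $u^{(2)}(x,t)=e^{-4\pi^2|\zeta|^2t-\frac{2\pi i}{\sqrt{\mu}}\zeta\cdot x}$, and invoke the density of $\{e^{-4\pi^2|\zeta|^2t}\}$ in $C^1(0,T)$ (Weierstrass) to conclude $(\alpha_{30}^1-\alpha_{30}^2)[u^{(1)}]^2\equiv0$ in $Q$. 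Since $u^{(1)}>0$, this yields $\alpha_{30}^1=\alpha_{30}^2$ in $Q$.

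The step I expect to demand the most care is the bookkeeping in the reduction — confirming that every profile entering \eqref{Linear4} other than the sought $\alpha_{30}$ is common to the two configurations. The delicate point is $v^{(1)}$: for $v^{(1)}_1=v^{(1)}_2$ one needs $\beta_{10}$ (frozen by hypothesis) and also $\beta_{01}$ (the coefficient constrained by the coercivity condition \eqref{CoerCond}) to be among the fixed data, on top of the coefficients with $m_2+n_2=4$ and the vanishing $\alpha_{03}=\alpha_{12}=\alpha_{21}\equiv0$ of \eqref{ThmAssump2}, which is exactly what prevents $u^{(2)}$-, $v^{(2)}$- or unknown-coefficient contributions from re-entering the $u^{(4)}$-equation. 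A secondary subtlety, inherited from the earlier proofs, is that the CGO profile $u^{(2)}$ is complex whereas the identity is first established only for real $f_2$; this is harmless because the identity is linear in $u^{(2)}$, and the positivity of the genuine nonlinear solution $u=\sum_{l\ge1}\varepsilon^l f_l$ on $\Sigma$ is unaffected, being ensured for small $\varepsilon>0$ purely by $f_1>0$. With these in place, the Fourier/CGO/Weierstrass machinery is identical to that of Theorems~\ref{ThmN2} and \ref{ThmN3}.
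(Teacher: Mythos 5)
Your argument coincides with the paper's own proof: subtract the two fourth-order $u$-equations, use the hypotheses $\alpha_{03}=\alpha_{12}=\alpha_{21}\equiv 0$ and the fixed fourth-order coefficients to reduce the right-hand side to $18(\alpha_{30}^1-\alpha_{30}^2)[u^{(1)}]^2u^{(2)}_2$, run the same Fourier/CGO/Weierstrass machinery as in the $m=1$ case, and finish with the strict positivity of $u^{(1)}$. The extra care you flag (that $v^{(1)}$ must be common to both configurations, and that the complex CGO profile is admissible by linearity) is consistent with, and slightly more explicit than, what the paper records.
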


\begin{proof}
As in the proof of the previous Theorem \ref{ThmN3}, 
for input $f$ and given functions $u_0,v_0,g$ satisfying the assumptions of Theorem \ref{MainThmForwardPb}, since $\beta_{10}$ is known and fixed, we can compute $u^{(1)}$, $u^{(2)}_j$ and $v^{(1)}$, using \eqref{Linear1} and \eqref{Linear2j}. 

Taking the difference of the two equations for the corresponding $j$-th problem of \eqref{Linear4} for $j=1,2$, we have, denoting $\tilde{u}=u^{(4)}_1-u^{(4)}_2$, 
\begin{equation}\label{N4eqU}
 \partial_t \tilde{u} - \mu\Delta \tilde{u} = 18(\alpha_{30}^1 - \alpha_{30}^2) [u^{(1)}]^2u^{(2)}_2
\end{equation}
when $\mathcal{M}^+_{F^1,G^1}=\mathcal{M}^+_{F^2,G^2}$.

Applying the same argument as in the case for $m=1$ to \eqref{N4eqU} with the CGO solution again for $u^{(1)}_2$, we have that 
\[(\alpha_{30}^1(x,t) - \alpha_{30}^2(x,t)) [u^{(1)}(x,t)]^2 = 0 \quad\forall (x,t)\in Q.\] Now, recall once again that $u^{(1)}$ is given by the heat equation \eqref{N1EqU} with positive boundary condition and non-negative initial condition, so it is unique and strictly positive and given by $\bar{u}+f_1(x,t)$ for $\bar{u}$ given by \eqref{u1Soln}. Therefore, 
\[\alpha_{30}^1(x,t)=\alpha_{30}^2(x,t)\quad\text{ in }Q.\]

\end{proof}

\begin{remark}
    Once again, a similar argument as in Remark \ref{remark:N2coef} and the last part of Remark \ref{remark:N3coef} can be applied to this result.
\end{remark}

We also obtain another result for $m=3$, corresponding to Theorem \ref{MainThm}(3):

\begin{theorem}\label{ThmN4V}
    
    Assume that $F,G\in\mathcal{A}$ are such that $\beta_{01}(x,t)\leq0$, $\alpha_{mn}$ are known and fixed for all $m,n$, and 
    \[\beta_{11}=\beta_{20}=\beta_{02}=\beta_{12}=\beta_{21}=\beta_{03}\equiv0\] and $\beta_{m_2n_2}$ are known and fixed for $m_2+n_2=4$, as well as $\beta_{10},\beta_{01}$. 
    Suppose 
    \[\text{either }\quad\beta_{10}\equiv0\quad\text{ or }\quad\alpha_{30}=\alpha_{12}=\alpha_{21}=\alpha_{03}\equiv0.\] 
    Let $\mathcal{M}^+_{F^j,G^j}$ be the measurement map associated to \eqref{MainPDE} for 
    \[u(x,t)=\sum_{l=1}^4\varepsilon^l f_l\quad \text{ on }\Sigma,\] 
    such that Conditions \eqref{Cond1}--\eqref{Cond2} are satisfied, as well as the assumptions of Theorem \ref{MainThmForwardPb}. 
    If 
    \[\mathcal{M}^+_{F^1,G^1}(u|_{\partial\Omega})=\mathcal{M}^+_{F^2,G^2}(u|_{\partial\Omega}),\]
    then it holds that
    \[\beta_{30}^1(x,t)=\beta_{30}^2(x,t)\quad\text{ in }Q.\]
\end{theorem}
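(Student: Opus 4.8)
The plan is to mirror, one order higher, the argument used to recover $\beta_{20}$ in Theorem~\ref{ThmN3}: run the fourth-order linearisation \eqref{Linear4} and read off $\beta_{30}$ from the equation for $v^{(4)}$. The first point to settle is which of the linearised quantities are forced to coincide for the two configurations $j=1,2$. Since every $\alpha_{mn}$ is known and fixed, the equations for $u^{(1)},u^{(2)},u^{(3)},u^{(4)}$ in \eqref{Linear1}--\eqref{Linear4} involve only known coefficients together with the prescribed boundary data $f_1,\dots,f_4$, so $u^{(k)}_1=u^{(k)}_2$ for $1\le k\le 4$; in particular $u^{(1)}>0$ in $Q$ by the maximum principle, since it solves the heat equation with $f_1>0$ and $u_0\ge 0$. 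Likewise, because $\beta_{11}=\beta_{20}=\beta_{02}\equiv 0$ and $\beta_{10},\beta_{01}$ are known, the equations for $v^{(1)}$ and $v^{(2)}$ collapse to $\partial_t v^{(k)}-\nu\Delta v^{(k)}-\beta_{01}v^{(k)}=\beta_{10}u^{(k)}$, whence $v^{(1)}_1=v^{(1)}_2$ and $v^{(2)}_1=v^{(2)}_2$; only $v^{(3)}$ and $v^{(4)}$ can carry the unknown $\beta_{30}$.

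Next I would subtract the two $v^{(4)}$-equations in \eqref{Linear4}. Under the stated hypotheses every term on the right-hand side cancels except the one proportional to $\beta_{30}$: the vanishing of $\beta_{11},\beta_{20},\beta_{02},\beta_{12},\beta_{21},\beta_{03}$ removes, in particular, all occurrences of $u^{(3)}$ and $v^{(3)}$ — crucially $\beta_{11}u^{(1)}v^{(3)}$ and $\beta_{02}v^{(1)}v^{(3)}$, which would otherwise survive the difference because $v^{(3)}$ depends on $\beta_{30}$; the degree-four coefficients $\beta_{40},\beta_{31},\beta_{22},\beta_{13},\beta_{04}$, the low-order coefficients $\beta_{00},\beta_{10},\beta_{01}$, and all $\alpha_{mn}$ are known; the lower-order factors $u^{(1)},u^{(2)},v^{(1)},v^{(2)},u^{(4)}$ coincide by the previous step; and the either/or clause ($\beta_{10}\equiv 0$ or $\alpha_{30}=\alpha_{12}=\alpha_{21}=\alpha_{03}\equiv 0$) is exactly what is needed so that no residual mixed term remains. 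Setting $\tilde v:=v^{(4)}_1-v^{(4)}_2$, I am then left with
\[
\partial_t\tilde v-\nu\Delta\tilde v-\beta_{01}\tilde v=18\,(\beta_{30}^1-\beta_{30}^2)\,[u^{(1)}]^2\,u^{(2)}\qquad\text{in }Q,
\]
together with $\tilde v=0$ on $\Sigma$ and $\tilde v(x,0)=0$, the homogeneous Cauchy data being inherited from \eqref{Linear4}; the coercivity hypothesis $\beta_{01}\le 0$ makes $\partial_t-\nu\Delta-\beta_{01}$ coercive, so this problem is uniquely solvable with a Green's function $\Psi$.

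From here the reasoning of Theorem~\ref{ThmN2} applies essentially verbatim. Representing $\tilde v$ through $\Psi$ and using its homogeneous data, the equality $\mathcal{M}^+_{F^1,G^1}=\mathcal{M}^+_{F^2,G^2}$ yields
\[
\int_0^{T}\Big[\Psi*\big((\beta_{30}^1-\beta_{30}^2)\,[u^{(1)}]^2\,u^{(2)}\big)\Big](x,T-s)\,ds=0
\]
for every admissible input $f_2$, hence for every solution $u^{(2)}$ of the heat equation in \eqref{Linear2}. Taking $u^{(2)}=e^{-4\pi^2|\zeta|^2 t-\frac{2\pi i}{\sqrt{\mu}}\zeta\cdot x}$, applying the spatial Fourier transform to this identity, and invoking the density of $\{e^{-4\pi^2|\zeta|^2 t}:\zeta\in\mathbb{R}^n\}$ in $C^1(0,T)$ via the Weierstrass approximation theorem, exactly as in the proof of Theorem~\ref{ThmN2}, one obtains $(\beta_{30}^1-\beta_{30}^2)\,[u^{(1)}]^2\equiv 0$ in $Q$; since $u^{(1)}>0$, this forces $\beta_{30}^1=\beta_{30}^2$ in $Q$.

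The step I expect to be most delicate is organisational rather than conceptual: one must check, term by term in the long $v^{(4)}$-equation of \eqref{Linear4}, that the precise list of hypotheses really does leave only the single monomial $[u^{(1)}]^2u^{(2)}$ in the difference, and one must verify that choosing $u^{(2)}$ to be a complex geometric optics exponential is consistent with the compatibility and initial conditions required by Theorem~\ref{MainThmForwardPb} — a point that, as in the earlier theorems, is handled by an approximation argument. Everything downstream of the difference equation is a direct repetition of the Fourier/Weierstrass/maximum-principle argument already established for the case $m=1$.
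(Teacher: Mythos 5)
Your proposal matches the paper's own proof essentially step for step: you form the difference of the two $v^{(4)}$-equations in \eqref{Linear4}, use the vanishing and known-coefficient hypotheses (together with the either/or clause handling the $\beta_{10}\tilde u$ term) to reduce it to $\partial_t\tilde v-\nu\Delta\tilde v-\beta_{01}\tilde v=18(\beta_{30}^1-\beta_{30}^2)[u^{(1)}]^2u^{(2)}_2$, and then repeat the CGO/Fourier/Weierstrass argument of Theorem \ref{ThmN2} plus the strict positivity of $u^{(1)}$ to conclude. This is the same route as the paper (your extra observation that all four $u^{(k)}_j$ coincide because every $\alpha_{mn}$ is known is a slightly sharper bookkeeping of why $\tilde u$ causes no trouble, but it does not change the argument).
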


\begin{proof}
Once again, for input $f$ and given functions $u_0,v_0,g$ satisfying the assumptions of Theorem \ref{MainThmForwardPb}, since $\beta_{10}$ is known and fixed, we can compute $u^{(1)}$, $u^{(2)}_j$ and $v^{(1)}$, using \eqref{Linear1} and \eqref{Linear2j}. 

Taking the difference of the two equations for the corresponding $j$-th problem of \eqref{Linear4} for $j=1,2$, we have, denoting $\tilde{u}=u^{(4)}_1-u^{(4)}_2$ and similarly $\tilde{v}=v^{(4)}_1-v^{(4)}_2$, 
\begin{equation}\label{N4eqV}
    \partial_t \tilde{v} - \nu\Delta \tilde{v} - \beta_{01} \tilde{v} = 18(\beta_{30}^1-\beta_{30}^2)[u^{(1)}]^2u^{(2)}_2 + \beta_{10} \tilde{u}
\end{equation}
when $\mathcal{M}^+_{F^1,G^1}=\mathcal{M}^+_{F^2,G^2}$. Observe that \eqref{N4eqV} is different from \eqref{N3eqV} with an additional term involving $\tilde{u}$. Note that even when $\mathcal{M}^+_{F^1,G^1}=\mathcal{M}^+_{F^2,G^2}$, we are unable to ensure that $u^{(4)}_1 = u^{(4)}_2$, unless when the assumptions of Theorem \ref{ThmN4U} are satisfied, i.e. 
\[\alpha_{12}=\alpha_{21}=\alpha_{03}\equiv0.\] Alternatively, \eqref{N4eqV} takes the same form as \eqref{N3eqV} when $\beta_{10}\equiv0$. In either of these two cases which are stated as assumptions of our Theorem, \eqref{N4eqV} reduces to 
\begin{equation}
    \partial_t \tilde{v} - \nu\Delta \tilde{v} - \beta_{01} \tilde{v} = 18(\beta_{30}^1-\beta_{30}^2)[u^{(1)}]^2u^{(2)}_2
\end{equation}

Then, applying the same argument as in the case for $m=1$ to \eqref{N4eqU} with the CGO solution again for $u^{(1)}_2$, we have that 
\[(\beta_{30}^1(x,t) - \beta_{30}^2(x,t)) [u^{(1)}(x,t)]^2 = 0 \quad\forall (x,t)\in Q.\] As in the previous Theorem \ref{ThmN4U}, since $u^{(1)}$ is strictly positive, we have the result 
\[\beta_{30}^1(x,t)=\beta_{30}^2(x,t)\quad\text{ in }Q.\]

\end{proof}

\begin{remark}
    Once again, a similar argument as in Remark \ref{remark:N3coef} can be applied to this result.
\end{remark}

\begin{proof}[Proof of Main Theorem \ref{MainThm}]
    We conclude the proof of Theorem \ref{MainThm} by considering  \[u(x,t)=\sum_{l=1}^{m+1}\varepsilon^l f_l\quad \text{ on }\Sigma,\] and the cases for $m\geq 3$ follow first similarly for $u$, as in Theorem \ref{ThmN4U}, and we obtain the result for Theorem \ref{MainThm}(2). Having obtained the equality of $u^{(m+1)}_j$ when $\alpha_{m_1n_1}\equiv0$ for all $2\leq m_1+n_1\leq m$, or when $\beta_{10}\equiv0$ such that the only terms involved are $u^{(1)}_j,v^{(1)}_j$ which are equal for $j=1,2$ when $\mathcal{M}^+_{F^1,G^1}=\mathcal{M}^+_{F^2,G^2}$, we can also argue similarly for $v$ to obtain the result for $\beta_{m0}^j$.
\end{proof}

\begin{remark}
This result can be easily extended to general second order parabolic operators of the form $\partial_t - \nabla \cdot (\sigma \nabla)$ for some fixed known measurable, bounded, coercive matrix $\sigma(x)$, by using the results of \cite{CGOGeneralLap}. Examples of such anisotropic PDE equations/systems include the anisotropic Fokker-Planck equation (see, for instance, \cite{AnisotropicFokkerPlanck1,AnisotropicFokkerPlanck2}), or PDE systems modelling general population dynamics (see, for instance, \cite{PredatorPreyAnisotropic1} or the discussion in the Introduction of \cite{PredatorPreyAnisotropic2}).
\end{remark}

\section{Biological Applications: Reactive-Diffusive Predator-Prey Models}\label{sect:app}
Our results can be applied to a variety of models, some examples of which have been given in Section \ref{sect:motivation}. Here, we explain our results by describing a group of examples modelling the dynamics of ecological differential systems with self diffusion given by diffusion constants $\mu,\nu>0$, for the case where the densities of the predator and prey are spatially inhomogeneous in a bounded domain  $\Omega$ with smooth boundary, of the form:
\begin{equation}\label{ModelGeneral}
    \begin{cases}
        \partial_t u - \mu\Delta u = F(u,v) &\quad \text{in }Q,\\
        \partial_t v - \nu\Delta v = G(u,v) &\quad \text{in }Q,\\
        u,v\geq0    &\quad \text{in }Q,\\
        u(x,0)=u_0(x)\geq0,\quad v(x,0) = v_0(x)\geq0  &\quad \text{in }\Omega, \\
        u = f\geq0,\quad v = g\geq0 &\quad \text{on }\Sigma.
    \end{cases}
\end{equation}
Here, $u$ and $v$ represent the prey population density $U$ and predator population density  $V$ respectively, such that the initial prey and predator population densities $u_0(x)>0$ are non-negative. Furthermore, the prey and predator population densities $f,g$ on the boundary $\partial\Omega$ of a Lipschitz bounded $\Omega$ for all time $t\in(0,T)$ are non-negative. 

Now, suppose the prey population $U$ adopts a cubic growth profile $au^3$ with constant $a>0$ (see, for instance, \cite{PredPreyCubic,PredPreyCubic2,PredPreyCubicDiffusive}), whereas the predator population $V$ has a growth rate $\alpha,b,\gamma>0$ depending on the prey population, as well as death rates $c,\beta>0$, so that the growth of the predator population $V$ could be represented by $bu-cv+(\alpha u - \beta v)v$. Finally, we assume that the predator adopts higher order cooperation in hunting, as in \cite{PredPreyHunting,PredPreyHuntingDiffusive}, so that the predation rate of the predator is given by terms of the form $\gamma uv$ and $(\lambda + \mu v) u^2$. Then, the functions $F$ and $G$ in \eqref{ModelGeneral} are specifically given by 
\[F(u,v)=a u^3 - (\lambda + \mu v) u^2 v,\]
and 
\[G(u,v)=bu - cv + (\alpha u - \beta v + \gamma uv) v + (\lambda + \mu v) u^2 v.\]

Substituting this into \eqref{ModelGeneral}, the reactive-diffusive predator-prey model we are considering is given by
\begin{equation}\label{Model}
    \begin{cases}
        \partial_t u - \mu\Delta u = a u^3 - (\lambda + \mu v) u^2 v &\quad \text{in }Q,\\
        \partial_t v - \nu\Delta v = bu - cv + (\alpha u - \beta v + \gamma uv) v + (\lambda + \mu v) u^2 v &\quad \text{in }Q,\\
        u,v\geq0    &\quad \text{in }Q,\\
        u(x,0)=u_0(x)\geq0,\quad v(x,0) = v_0(x)\geq0  &\quad \text{in }\Omega, \\
        u = f\geq0,\quad v = g\geq0 &\quad \text{on }\Sigma.
    \end{cases}
\end{equation}
Clearly, the origin $(0,0)$ is an equilibrium point for this system. 

Then, we have two possible results, the first of which is given by the following:

\begin{proposition}
Assume that $\mu,\nu>0$ and $a(x,t)$, $b(x,t)$, $c(x,t)$, $\alpha(x,t)$, $\beta(x,t)$, $\gamma(x,t)$, $\lambda(x,t)$, $\mu(x,t)$ are continuous with respect to $x$ and $t$. Suppose \[\alpha(x,t)=\beta(x,t)\equiv0\] and $c(x,t)$ known and fixed for each $x,t$. Let $\mathcal{M}^+_{G^j}$ be the measurement map associated to \eqref{Model} for 
\[u(x,t)=\sum_{l=1}^2\varepsilon^l f_l\quad \text{ on }\Sigma,\] 
such that Conditions \eqref{Cond1}--\eqref{Cond2} are satisfied, as well as the assumptions of Theorem \ref{MainThmForwardPb}. In particular, this means that 
\[f_1(x,t)>0\quad\forall x\in\Omega,t\in(0,T).\] 
If 
\[\mathcal{M}^+_{G^1}(u|_{\partial\Omega})=\mathcal{M}^+_{G^2}(u|_{\partial\Omega}),\]
then it holds that
\[b^1(x,t)=b^2(x,t)\text{ in }Q.\]
\end{proposition}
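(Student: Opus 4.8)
The plan is to recognise the model \eqref{Model} as a particular instance of the general system \eqref{MainPDE}--\eqref{Gform} and then to invoke Theorem~\ref{ThmN2} (which is exactly Theorem~\ref{MainThm}(1)) directly. First I would expand the two reaction terms into monomials in $(u,v)$:
\[
F(u,v) = a\,u^3 - \lambda\,u^2 v - \mu\,u^2 v^2,
\]
\[
G(u,v) = b\,u - c\,v + \alpha\,u v - \beta\,v^2 + \lambda\,u^2 v + \gamma\,u v^2 + \mu\,u^2 v^2 .
\]
Matching these with \eqref{Fform}--\eqref{Gform} identifies $\alpha_{30}=a$, $\alpha_{21}=-\lambda$, $\alpha_{22}=-\mu$ (all other $\alpha_{mn}=0$), and $\beta_{10}=b$, $\beta_{01}=-c$, $\beta_{11}=\alpha$, $\beta_{02}=-\beta$, $\beta_{21}=\lambda$, $\beta_{12}=\gamma$, $\beta_{22}=\mu$ (all other $\beta_{mn}=0$; in particular $\beta_{20}\equiv0$). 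Both $F$ and $G$ are polynomials in $(p,q)$ that vanish at $(p,q)=(0,0)$, with $F$ having no terms of order $\le 2$ and $G$ of lowest order $1$, so they lie in the admissible class $\mathcal{A}$; the target coefficient $b$ is precisely $\beta_{10}$, and the prey reaction $F$ (hence $a,\lambda,\mu$) is common to the two configurations.

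Next I would verify the structural hypotheses of Theorem~\ref{ThmN2} under the assumptions of the Proposition. The hypothesis $\alpha(x,t)=\beta(x,t)\equiv0$ forces $\beta_{11}=\beta_{02}\equiv0$, as required; $\beta_{20}\equiv0$ is automatically known and fixed; $\beta_{01}=-c$ is known and fixed by hypothesis, and $c>0$ yields $\beta_{01}=-c\le 0$, which is the coercivity condition \eqref{CoerCond} that makes $\partial_t-\nu\Delta-\beta_{01}$ coercive so that the maximum principle gives $v^{(1)}_j>0$ and $v^{(2)}_j$ non-negative. Note that $a,\lambda,\mu$ need not be fixed for this step: since $F$ carries no terms of order $\le 2$, the second-order linearisation of the $u$-equation in \eqref{Linear2} is source-free, so $u^{(2)}_j$ solves the pure heat equation with data $f_2^j$ and never sees $F$ — which is why only the measurement map $\mathcal{M}^+_{G^j}$ is involved. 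Finally $b>0$ gives $\beta_{10}\ge0$, so with $f_1>0$ the two-term probing family $u(x,t;\varepsilon)=\varepsilon f_1+\varepsilon^2 f_2$ produces genuinely non-negative solutions $(u,v)$ of \eqref{Model} for all small $\varepsilon>0$, keeping the whole argument inside the positivity-preserving regime.

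With these checks in place I would simply apply Theorem~\ref{ThmN2} with $\beta_{10}^j=b^j$: the identity $\mathcal{M}^+_{G^1}(u|_{\partial\Omega})=\mathcal{M}^+_{G^2}(u|_{\partial\Omega})$ for the chosen probing family forces $f_2^1=f_2^2$, hence $u^{(2)}_1=u^{(2)}_2$, and then the first-order compatibility at $t=0$ together with the Fourier-transform/CGO density argument of that proof yields $\beta_{10}^1=\beta_{10}^2$ in $Q$, i.e.\ $b^1=b^2$ in $Q$. The only point requiring a word of care — and the closest thing to an obstacle here — is the gap between the mere continuity of the coefficients assumed in the Proposition and the Hölder regularity built into $\mathcal{A}$ and into Theorem~\ref{MainThmForwardPb}; I would dispose of it by observing that continuity of $b$ is all the final density argument in the proof of Theorem~\ref{ThmN2} actually uses, the Hölder hypotheses serving only to guarantee well-posedness of the (here polynomial, hence perfectly tame) forward problem, so no new estimate is needed and the conclusion follows at once.
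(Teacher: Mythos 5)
Your proposal is correct and follows exactly the paper's route: the paper's entire proof is ``This follows simply by applying Theorem~\ref{ThmN2},'' and your coefficient matching ($\beta_{10}=b$, $\beta_{01}=-c$, $\beta_{11}=\alpha$, $\beta_{02}=-\beta$, $\beta_{20}\equiv0$) together with the verification of the hypotheses of Theorem~\ref{ThmN2} is precisely the (implicit) content of that one-line proof. Your additional observations --- that the source-free second-order $u$-equation explains why only $\mathcal{M}^+_{G^j}$ is needed, and the remark on the continuity-versus-H\"older gap --- are sound elaborations rather than deviations.
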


\begin{proof}
    This follows simply by applying Theorem \ref{ThmN2}.
\end{proof}

The assumption on $f_1(x,t)>0$ means that the prey population density at the boundary $\partial\Omega$ is strictly positive for all time $t\in(0,T)$, though it is close to 0. 
Physically, our result can be interpreted in the sense that, we input a certain population of prey from the boundary into a bounded encircled region $\Omega$, and measure the prey population density value as well as its flux at the boundary of $\Omega$. If all these values are the same, and assuming a predator growth rate that consists of a term of zeroth order with respect to the prey, as well as terms of at least cubic order, and that the linear death rate of the predator population is known and fixed, there is only one possible value for predator's zeroth order dependence growth coefficient with respect to the prey.

On the other hand, we can also apply Theorem \ref{ThmN4U} instead to obtain the second possible result:
\begin{proposition}
Assume that $\mu,\nu>0$ and $a(x,t)$, $b(x,t)$, $c(x,t)$, $\alpha(x,t)$, $\beta(x,t)$, $\gamma(x,t)$, $\lambda(x,t)$, $\mu(x,t)$ are continuous with respect to $x$ and $t$. Suppose \[\lambda(x,t)\equiv0.\]  Let $\mathcal{M}^+_{F^j,G^j}$ be the measurement map associated to \eqref{Model} for 
\[u(x,t)=\sum_{l=1}^4\varepsilon^l f_l\quad \text{ on }\Sigma,\] 
such that Conditions \eqref{Cond1}--\eqref{Cond2} are satisfied, as well as the assumptions of Theorem \ref{MainThmForwardPb}. In particular, this once again means that 
\[f_1(x,t)>0\quad\forall x\in\Omega,t\in(0,T).\]
If 
\[\mathcal{M}^+_{F^1,G^1}(u|_{\partial\Omega})=\mathcal{M}^+_{F^2,G^2}(u|_{\partial\Omega}),\]
then it holds that
\[a^1(x,t)=a^2(x,t) \quad \text{ in }Q.\]
\end{proposition}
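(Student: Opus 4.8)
The plan is to recognise the predator--prey model \eqref{Model} with $\lambda\equiv0$ as a special instance of the general system \eqref{MainPDE} and then apply Theorem~\ref{ThmN4U} verbatim. First I would set $\lambda\equiv0$ in the reaction terms, obtaining
\[F(u,v)=a\,u^3-\mu\,u^2v^2,\qquad G(u,v)=bu-cv+\alpha uv-\beta v^2+\gamma uv^2+\mu u^2v^2,\]
where in these formulae $\mu$, $a$, $b$, $c$, $\alpha$, $\beta$, $\gamma$ denote the coefficient functions of $(x,t)$. These are polynomials in $(u,v)$ vanishing at the origin, so their holomorphic extension in the complex variables $p,q$ is immediate and $F,G\in\mathcal{A}$; moreover $F$ has lowest order $3$ and $G$ lowest order $1$, matching the structural forms \eqref{Fform}--\eqref{Gform}. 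Reading off the coefficients, the only nonzero $\alpha_{mn}$ are $\alpha_{30}=a$ and $\alpha_{22}=-\mu$, while on the $G$-side $\beta_{10}=b$, $\beta_{01}=-c\le0$ (so the coercivity condition \eqref{CoerCond} holds), $\beta_{11}=\alpha$, $\beta_{02}=-\beta$, $\beta_{12}=\gamma$ and $\beta_{22}=\mu$.

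Next I would check the hypotheses of Theorem~\ref{ThmN4U}. Since $F$ contains no monomials $uv^2$, $u^2v$ or $v^3$, one automatically has $\alpha_{03}=\alpha_{12}=\alpha_{21}\equiv0$. Comparing two configurations $(F^1,G^1)$ and $(F^2,G^2)$ that differ only in the prey self-growth rate $a$, the linear coupling $\beta_{10}=b$ and all quartic coefficients $\alpha_{m_2n_2}$ with $m_2+n_2=4$ (only $\alpha_{22}=-\mu$ is nonzero here, the others being $0$) are known and fixed, which is exactly what Theorem~\ref{ThmN4U} requires. The regularity and compatibility assumptions of Theorem~\ref{MainThmForwardPb}, together with Conditions \eqref{Cond1}--\eqref{Cond2} and in particular $f_1>0$ on $\Sigma$, are assumed in the statement, so the forward problem is well posed with non-negative solutions and the probing family $u=\sum_{l=1}^4\varepsilon^l f_l$ stays positive on $\Sigma$ for all small $\varepsilon>0$. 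There is no genuine analytic obstacle beyond what Theorem~\ref{ThmN4U} already contains; the only point needing care is the bookkeeping --- confirming that, once $\lambda\equiv0$, this specific ecological nonlinearity really does have the cubic cross terms $\alpha_{03},\alpha_{12},\alpha_{21}$ absent and that $b$ and the quartic $\alpha$-coefficients may legitimately be treated as known and fixed in the comparison.

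With all hypotheses verified, Theorem~\ref{ThmN4U} yields $\alpha_{30}^1=\alpha_{30}^2$ in $Q$, that is $a^1=a^2$ in $Q$, which is the assertion. For orientation I would also recall the mechanism internal to Theorem~\ref{ThmN4U}: carrying out the first-, second- and fourth-order linearisations in $\varepsilon$, subtracting the two $u^{(4)}$-equations, and using $\alpha_{03}=\alpha_{12}=\alpha_{21}\equiv0$ together with the facts that $u^{(1)}$, $u^{(2)}$ and $v^{(1)}$ (the latter determined through the known $\beta_{10},\beta_{01}$) coincide for the two configurations, one is left with $\partial_t\tilde u-\mu\Delta\tilde u=18(\alpha_{30}^1-\alpha_{30}^2)[u^{(1)}]^2u^{(2)}_2$ with zero initial and boundary data; inserting the CGO solution for $u^{(2)}_2$, applying a spatial Fourier transform and invoking Weierstrass approximation forces $(\alpha_{30}^1-\alpha_{30}^2)[u^{(1)}]^2\equiv0$, whereupon the strict positivity of $u^{(1)}$ (maximum principle for the heat equation with boundary data $f_1>0$) gives $\alpha_{30}^1=\alpha_{30}^2$.
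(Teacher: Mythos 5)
Your proof is correct and takes essentially the same route as the paper: the paper's own proof is just the one-line remark that the proposition follows from Theorem~\ref{ThmN4U} with $m=3$. Your coefficient bookkeeping ($\alpha_{30}=a$, $\alpha_{22}=-\mu$, $\alpha_{03}=\alpha_{12}=\alpha_{21}\equiv0$, $\beta_{10}=b$, $\beta_{01}=-c\leq0$, and so on) simply makes explicit the hypothesis verification that the paper leaves implicit, and your recap of the internal mechanism of Theorem~\ref{ThmN4U} is consistent with its proof.
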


\begin{proof}
    This simply follows from Theorem \ref{ThmN4U} for the case $m=3$.
\end{proof}

Once again, we have assumed that the prey population density at the boundary $\partial\Omega$ is strictly positive for all time $t\in(0,T)$, by assuming $f_1(x,t)>0$. In the physical setting, this result means that we input a certain population of prey from the boundary into a bounded encircled region $\Omega$, and measure the prey population density value as well as its flux at the boundary of $\Omega$. If all these values are the same, there is only one possible value for the cubic prey growth rate.

Note that with the same model system \eqref{Model}, two different results are obtained under two different set of assumptions. This is an interesting aspect of our result, and arises only when we needed to enforce the positivity requirement.

\section{Discussion of Results}\label{sect:discuss}
\subsection{Technical Challenges}
Our main novelty in this paper is to consider \[u(x,t;\varepsilon)=\sum_{l=1}^\infty\varepsilon^l f_l\quad \text{ on }\Sigma\quad \text{ for }f_1>0,\] with $f_2(x,t)$ possibly positive or negative at different $x,t$, so that $u(x,t;\varepsilon)>0$ on $\Sigma$ for all small positive $\varepsilon$. In this regard, it can be seen, from Theorem \ref{MainThm}, that we can only derive the coefficient for $u^m$ for $m\geq1$, and in addition, with many assumptions required, such as the coefficients of terms of lower and equal order being zero. Although such assumptions are by no means restrictive, we can notice from the previous section that this still limits the types of physical models we can apply our method to (for instance, no linear nor quadratic terms for the first variable), as well as the amount of information on the involved parameters that we can recover via an input of the boundary data.

This limitation arises from the enforcement of positivity of the solutions. Indeed, we see that if positivity is not required, the boundary data of $u$ and $v$ can be used to fully determine the semilinear terms $F$ and $G$, for all coefficients $\alpha_{mn},\beta_{mn}$ for any $m,n\geq0$, $m+n\geq1$. This can be easily done by viewing the system of equations \eqref{MainPDE} as a single parabolic equation for $w=(u,v)$, and applying the result in \cite{LinLiuLiuZhang2021-InversePbSemilinearParabolic-CGOSolnsSuccessiveLinearisation} given as Theorem 1.3, by once again making use of the same method of higher order linearisation we have used here.

At the same time, it should be noted that in our case, our measurement map only involves $u$, and no information is required for $v$. This is different from that considered in \cite{LinLiuLiuZhang2021-InversePbSemilinearParabolic-CGOSolnsSuccessiveLinearisation}. As such, our result can, in fact, be viewed as an inverse problem involving partial data.

\subsection{Future Outlook}
Despite these technical challenges, the notion of enforcing the positivity requirement is still crucial, particularly in many physical applications, such as in the model example \eqref{Model} we have considered. Furthermore, many previous works on the forward problem for parabolic PDEs require the positivity of solutions to obtain further properties including uniqueness, regularity and maximum principles (see, for instance, \cite{Positive2,PaoNonlinearBook,Positive1}). However, to the best of our knowledge, our method is one of the only two methods available to consider positive solutions, the other being expansion around $1$ as done in \cite{LiuZhang2022-InversePbMFG}. Our next step will be to consider that possibility instead for the ecological models similar to \eqref{Model}, and discover the parameters we can identify in that case, which may be similar or different from what we obtain here. We choose to investigate these extensions in our future works, and end with the highly intriguing challenge to readers to identify other possible methods to consider the inverse problem for PDEs with positive solutions.

	\medskip 
	
	\noindent\textbf{Acknowledgment.} 
	H. Liu is supported by the Hong Kong RGC General Research Fund (projects 12302919, 12301218 and 11300821) and the NSFC/RGC Joint Research Grant (project N\_CityU101/21).

\bibliographystyle{plain}
%\printbibliography
\bibliography{ref.bib,ref1.bib,refParaSysInverse.bib}
\end{document}